\numberwithin{equation}{section}
   \theoremstyle{plain}
\newtheorem{theorem}[subsection]{Theorem}
\newtheorem{proposition}[subsection]{Proposition}
\newtheorem{lemma}[subsection]{Lemma}
\newtheorem{corollary}[subsection]{Corollary}
\newtheorem{definition}[subsection]{Definition}
\newtheorem*{main-corollary-repeat}{Corollary \ref{main-corollary}}
\renewcommand{\leq}{\leqslant}
\renewcommand{\geq}{\geqslant}
\newsavebox{\proofbox}
\savebox{\proofbox}{\begin{picture}(7,7)  \put(0,0){\framebox(7,7){}}\end{picture}}
\def\boxeq{\tag*{\usebox{\proofbox}}}
\newcommand\Z{\mathbb{Z}}
\newcommand\R{\mathbb{R}}
\newcommand\C{\mathbb{C}}
\newcommand\N{\mathbb{N}}
\newcommand\SL{\operatorname{SL}}
\newcommand\SU{\operatorname{SU}}
\newcommand\PU{\operatorname{PU}}
\newcommand\im{\operatorname{im}}
\newcommand\bes{\operatorname{bes}}
\newcommand\GL{\operatorname{GL}}
\newcommand\U{\operatorname{U}}
\newcommand\Mat{\operatorname{Mat}}
\newcommand\eps{\varepsilon}
\newcommand\id{I_n}
\def\proof{\noindent\textit{Proof. }}
\def\endproof{\hfill{\usebox{\proofbox}}\vspace{11pt}}
\begin{document}
\title[Approximate unitary groups]{Approximate groups III: the unitary case}

\author{Emmanuel Breuillard}
\address{Laboratoire de Math\'ematiques
Universit\'e Paris-Sud 11, 91405 Orsay cedex,
France }
\email{emmanuel.breuillard@math.u-psud.fr}
\author{Ben Green}
\address{Centre for Mathematical Sciences\\
Wilberforce Road\\
Cambridge CB3 0WA\\
England }
\email{b.j.green@dpmms.cam.ac.uk}

\begin{abstract}
By adapting the classical proof of Jordan's theorem on finite subgroups of linear groups, we show that every approximate subgroup of the unitary group $\U_n(\C)$ is almost abelian.
\end{abstract}
\maketitle
\tableofcontents

\section{Introduction}

This paper is the third in a series concerning \emph{approximate groups}, the first two papers in the series being \cite{bg1,bg2}. Let us begin by repeating the definition of
``$K$-approximate group'' due to T. Tao (see \cite{tao-noncommutative}).

\begin{definition}[Approximate groups]
Let $G$ be some group and let $K \geq 1$. A finite subset $A \subseteq G$ is called a $K$-approximate group\footnote{We make here a slight abuse of terminology, because our definition is not intrinsic and makes use of the ambient group $G$, so stricto sensu we define here approximate subgroups of $G$.} if

\begin{enumerate}
\item It is symmetric, i.e. if $a \in A$ then $a^{-1} \in A$, and the
identity lies in $A$;

\item There is a symmetric subset $X \subseteq G$ with $|X| \leq K$
such that $A^2 \subseteq XA$.
\end{enumerate}
\end{definition}

Here, as usual, $A^2$ denotes the product set $\{a_1a_2 | a_1,a_2 \in A\}$ and $XA$ denotes $\{ x a | x \in X, a \in A\}$. One of the main reasons for introducing approximate groups was to understand finite subsets $A$ in a group $G$ satisfying a doubling or tripling condition, that is $|A^2|$ or $|A^3|$ is not much larger that $|A|$. To a large extent, the classification of sets of small doubling reduces to the classification of approximate groups. For the relation between the two concepts, we refer the reader to Tao's original paper \cite{tao-noncommutative}. We have chosen to work with approximate groups here, rather than directly with sets of small tripling (say), so as to be compatible with our previous papers and other work of the authors and Tao. Approximate groups also have one or two advantages over sets with small tripling -- for example, they behave rather better under homomorphisms.

Working with approximate groups, it is convenient to introduce the following notion, defined by Tao in \cite{tao-solvable}.

\begin{definition}[Control]
Suppose that $A$ and $B$ are two subsets of a group $G$, and that $K
\geq 1$ is a parameter. We say that $A$ is $K$-controlled by $B$, or that $B$
$K$-controls $A$, if $|B| \leq K|A|$ and there is some set $X \subseteq G$ with $|X| \leq K$ and such that $A \subseteq XB \cap
BX$.
\end{definition}

Let $n \geq 1$ be an integer, and write $\U_n(\C)$ for the group of unitary matrices. The main result of this note is the following.

\begin{theorem}\label{mainthm}
Suppose that $A \subseteq \U_n(\C)$ is a $K$-approximate group and that $K \geq 2$. Then $A$ is $n^{Cn^3}K^{C n}$-controlled by $B$, a $K^C$-approximate subgroup of $\U_n(\C)$, which consists of simultaneously diagonalisable matrices.
\end{theorem}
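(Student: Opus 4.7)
The plan is to adapt Jordan's classical proof of his theorem on finite subgroups of $\U_n(\C)$, whose key ingredient is the commutator contraction near the identity: if $g,h \in \U_n(\C)$ satisfy $\|g-I\|_{\op}, \|h-I\|_{\op} \leq \eta$ (operator norm), then $\|[g,h] - I\|_{\op} = O(\|g-I\|_{\op} \cdot \|h-I\|_{\op})$. In Jordan's setting one picks the non-identity element of a finite group closest to $I$; its commutator with anything is even closer, hence trivial. For approximate groups the challenge is to extract the analogue of ``element closest to $I$'' and to replace exact commutation arguments with a quantitative iteration. I would proceed in four steps.

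\textbf{Step 1 (Concentration near the identity).} Cover $\U_n(\C)$ by $\lesssim (C/\eta)^{n^2}$ operator-norm balls of radius $\eta$, possible since $\U_n(\C)$ is compact of real dimension $n^2$. Pigeonholing $A$ across these balls yields some $x \in \U_n(\C)$ such that $|A \cap B(x,\eta)| \geq |A|/N$, and therefore the symmetric set $A_\eta := (A^{-1}A) \cap B(I,2\eta)$ contains at least $|A|/N$ elements and lies in a small neighbourhood of $I$. Since $A^{-1}A$ is $K^{O(1)}$-controlled by $A$ (standard Plünnecke--Ruzsa / Tao-type machinery for approximate groups), this gives a useful piece of $A$ concentrated near the identity, at the price of a factor $(C/\eta)^{n^2}$ in control constants.

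\textbf{Step 2 (Commutator iteration / escape).} Apply the commutator contraction: for $\eta$ below a small absolute constant, $[A_\eta,A_\eta] \subseteq B(I, C\eta^2)$, and more generally iterated commutators live in balls whose radii shrink double-exponentially. Combine this with the fact that inside $A^M$ one can find large subsets living in arbitrarily small neighbourhoods of $I$ (Step 1 applied at many scales). The goal is to produce, for $\eta$ chosen so small that the contraction would violate any nontrivial size lower bound, a subset $A' \subseteq A^k$ (with $k$ bounded in terms of $n$) whose elements \emph{exactly} pairwise commute, and with $|A'| \geq |A|/(n^{Cn^3}K^{Cn})$. The bookkeeping that produces the factor $n^{Cn^3}$ comes from iterating the Step~1 pigeonhole at scales $\eta, \eta^2, \eta^4, \ldots$ up to depth $\sim \log n$ (deep enough that the escape-from-subvarieties phenomenon forces commutation rather than merely near-commutation), each level costing a factor $(C/\eta)^{n^2}$; and the factor $K^{Cn}$ from the $n$-fold use of Plünnecke--Ruzsa to pass between powers of $A$.

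\textbf{Step 3 (Diagonalisation).} A set of pairwise commuting unitaries admits a common orthonormal eigenbasis (spectral theorem), hence is simultaneously diagonalisable. Conjugating into that basis, the set $A'$ from Step 2 becomes a set of diagonal unitaries, and this is our $B$. One checks that $B$ inherits the $K^{O(1)}$-approximate-group property from $A$ (via the relation $A' \subseteq A^k$ and standard manipulation), then applies a Ruzsa covering lemma to promote $B$ to a genuine $K^C$-approximate subgroup of the diagonal torus while $A \subseteq XB$ for some $X$ of size $n^{Cn^3}K^{Cn}$. The control condition $A \subseteq BX \cap XB$ and $|B| \leq n^{Cn^3}K^{Cn}|A|$ follow from the size bound on $A'$ together with the approximate-group bounds.

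\textbf{Main obstacle.} The real work is in Step 2, which must upgrade ``elements near $I$ nearly commute'' into ``a large piece of $A^k$ strictly commutes,'' while tracking the constants carefully to produce exactly $n^{Cn^3}K^{Cn}$ rather than a worse dependence. This is the analogue, in the approximate-group setting, of Jordan's trick of using the \emph{closest} non-identity element: here there is no single ``closest element,'' so one must iterate the pigeonhole/commutator-shrinking dichotomy at successively finer scales, and at each scale either pass to a still-large approximate subgroup living in a much smaller neighbourhood of $I$, or detect the desired commutativity. Managing this bootstrap without blowing up the $K$ and $n$ dependence is where the combinatorial and Lie-theoretic content of the theorem concentrates.
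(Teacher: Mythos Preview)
Your Steps 1 and 3 are fine and indeed appear in the paper. The gap is in Step 2, and it is structural rather than merely technical. The commutator-contraction trick naturally delivers a \emph{single} element $\gamma$ with large centraliser in $A^2$: one picks $\gamma \in A' \setminus \{\id\}$ minimising $d(\gamma,\id)$, observes that every commutator $[\gamma,x]$ with $x \in A'$ lies in a ball of radius $\tfrac12 d(\gamma,\id)$ about $\id$, and then uses approximate-group doubling to show only $n^{Cn^2}K^{O(1)}$ such values can occur, whence a large piece of $A'$ maps to a constant commutator and thus centralises $\gamma$. What this does \emph{not} give is a large \emph{pairwise}-commuting set: the elements of the centraliser of $\gamma$ have no reason to commute with one another.

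The paper closes this gap not by a multi-scale commutator iteration as you propose, but by an induction on rank. The centraliser $C_{\U_n(\C)}(\gamma)$ is isomorphic to $\U_{n_1}(\C)\times\cdots\times\U_{n_k}(\C)$ with each $n_i<n$ and $\sum n_i=n$; one intersects $A^2$ with this subgroup (still a $K^{O(1)}$-approximate group, by standard intersection lemmas) and reruns the argument inside each factor. After at most $n$ iterations the factors are all $\U_1(\C)$, hence abelian, and one arrives at $B=A^2\cap H'$ with $H'$ an abelian subgroup and $|B|\geq n^{-Cn^3}K^{-Cn}|A|$. The exponent $n^{Cn^3}$ comes from $n$ applications of the pigeonhole (each costing $n^{Cn^2}$), and $K^{Cn}$ from $n$ applications of the approximate-group machinery (each costing $K^{C}$) --- not from a $\log n$-depth double-exponential cascade as you sketched. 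Your multi-scale plan does not explain how iterated commutators, which live in ever-higher powers of $A$ and form ever-smaller sets, would assemble into a large mutually commuting piece; the rank-reduction induction is the missing idea.
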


Here $C$ is an absolute constant which could be specified explicitly if desired. As a corollary of Theorem \ref{mainthm} we can deduce a more precise result along similar lines, albeit with somewhat worse bounds.

\begin{corollary}\label{main-corollary}
Suppose that $A \subseteq \U_n(\C)$ is a $K$-approximate group, $K \geq 2$. Then there is a connected abelian subgroup $S \subseteq \U_n(\C)$ such that $A$ lies in the normaliser $N(S)$, and such that the image of $A$ under the quotient homomorphism $\pi : N(S) \rightarrow N(S)/S$ has cardinality at most $n^{Cn^4}K^{Cn^2}$.
\end{corollary}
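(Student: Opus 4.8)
\noindent\textit{Proof proposal.} First I would dispose of a trivial case: if $|A| \leq n^{Cn^4}K^{Cn^2}$, take $S = \{I_n\}$; then $N(S) = \U_n(\C)$, the map $\pi$ is the identity, and $\pi(A) = A$ already has the required size. So assume from now on that $|A|$ is large.

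The plan is then to feed $A$ into Theorem \ref{mainthm} and bootstrap. It produces a $K^C$-approximate subgroup $B$, consisting of simultaneously diagonalisable --- hence pairwise commuting --- unitary matrices, which $K_1$-controls $A$ for $K_1 := n^{Cn^3}K^{Cn}$; so there is $X$ with $|X| \leq K_1$, $A \subseteq XB \cap BX$, and $|B| \geq |A|/K_1$ is large. After conjugating I may assume the elements of $B$ are diagonal, so that $\langle B \rangle$ is an abelian subgroup of the maximal torus $T$, and there is a well-defined smallest subtorus $S_0 \subseteq T$ with $\langle B \rangle \subseteq S_0$; this $S_0$ is a connected abelian subgroup of $\U_n(\C)$. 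If one could show that all of $A$ normalises $S_0$, the corollary would follow at once, and with the better bound $|\pi(A)| \leq K_1 = n^{Cn^3}K^{Cn}$: indeed $\langle B \rangle \subseteq S_0$ forces $\pi(B) = \{I_n\}$ for $\pi : N(S_0) \to N(S_0)/S_0$, and then $A \subseteq XB$ gives $\pi(A) \subseteq \pi(X)$, of size $\leq |X| \leq K_1$.

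What is robust is that $A$ meets at most $K_1$ cosets of $N(S_0)$: two elements of a fibre $A \cap xB$ differ by an element of $B^2 \subseteq \langle B \rangle \subseteq S_0 \subseteq N(S_0)$ (and $S_0$ being abelian, $\langle B\rangle$ even centralises $S_0$), so each fibre lies in a single coset of $N(S_0)$. The point where one loses is in trying to conclude from this that a \emph{single} conjugate of $S_0$ is preserved: the naive route --- ``$A$ a $K$-approximate group $\Rightarrow$ each occurring $x$ approximately normalises $B$ $\Rightarrow$ $x$ normalises $S_0$'' --- breaks down, because $xBx^{-1}$ is only known to lie in boundedly many translates of $B$, a condition satisfied e.g. by cosets of proper subgroups, and so not enough to force commensurability of $x\langle B\rangle x^{-1}$ with $\langle B\rangle$. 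So I would instead argue by recursion on $n$: pass to the image of a largest piece of $A$ lying in one coset of $N(S_0)$ under $N(S_0) \to N(S_0)/S_0$, which is a $K_1^{O(1)}$-approximate group controlled by the (abelian) image of $B$, sitting inside a group whose maximal torus has rank strictly less than $n$ (since $\dim S_0 \geq 1$), and repeat. Each of the at most $n$ levels costs one factor of $K_1$ for the coset count --- and only that, since Theorem \ref{mainthm} is invoked once, at the top; reassembling the tori produced along the way into a single connected abelian subgroup $S \subseteq \U_n(\C)$ normalised by all of $A$ then yields the claimed bound $n^{Cn^4}K^{Cn^2} \approx K_1^{\,n}$.

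The main obstacle is precisely this recursion: passing from ``$A$ meets few $N(S_0)$-cosets'' to an honest connected abelian subgroup normalised by the whole of $A$ (the difficulty being that the cosets need not propagate through products in a controlled way), and keeping track of how the bounded number of cosets at the $\leq n$ levels multiply up. Everything else --- the trivial case, the invocation of Theorem \ref{mainthm}, the coset count within a fibre, and the final division by $S$ --- is routine.
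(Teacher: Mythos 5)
There is a genuine gap here, and you have put your finger on it yourself. The crux of the corollary is to produce one single connected abelian $S$ with $A \subseteq N(S)$, and your proposal never actually supplies this. What you prove is that $A$ meets at most $K_1$ left cosets of $N(S_0)$ (where $S_0$ is the subtorus generated by the diagonal set $B$ coming from Theorem \ref{mainthm}); this is a coset count, not a normalisation statement, and as you note it cannot be upgraded to ``$A$ normalises $S_0$''. The proposed remedy --- recurse in $N(S_0)/S_0$ with a large piece of $A$ in a single coset, then ``reassemble'' the tori found at each level --- is where the argument stops being an argument: a torus found one level down is only normalised by the image of a fragment of $A$, not of all of $A$, and lifting it back to $\U_n(\C)$ gives an extension of a torus by $S_0$ whose normaliser has no reason to contain the rest of $A$. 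You flag this as the ``main obstacle'' and then treat it as bookkeeping; it is in fact the entire content. (There are also smaller issues: $\dim S_0 \geq 1$ is not guaranteed since $\langle B\rangle$ can be finite; and $N(S_0)/S_0$ is a general compact group, so Theorem \ref{mainthm} cannot be re-invoked there, which undercuts the recursion you sketch.)

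The move the paper makes, and which is missing from your proposal, is to \emph{define} $S := \bigcap_{a \in \langle A \rangle} a T a^{-1}$, where $T$ is the maximal torus produced by Theorem \ref{mainthm}. This set is tautologically normalised by $\langle A\rangle$, and Lemma \ref{gtorus} shows it is a connected abelian subgroup (a root torus) expressible as an intersection of at most $n$ conjugates $a_iTa_i^{-1}$ with $a_i \in A^n$. The remaining work is then not to find $S$ but to show $|A^2 \cap S|$ is still a positive proportion of $|A|$, which the paper does by induction on the number of conjugates intersected: if $B_1 = A^2 \cap S_i$ and $B_2 = a_i(A^2\cap T)a_i^{-1}$ are both large, then a fibre-counting argument applied to the product map $B_1 \times B_2 \to B_1B_2 \subseteq A^{O(n)}$ produces many coincidences $b_1b_2 = b_1'b_2'$, and $b_1'^{-1}b_1 = b_2'b_2^{-1}$ lands in $B_1^2 \cap B_2^2 \subseteq A^4 \cap S_{i+1}$; this is what makes the intersection survive $n$ rounds, losing only a fixed power of $\delta K^{-O(n)}$ each time. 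Your draft has no analogue of either of these two steps --- the intersection construction that makes $A \subseteq N(S)$ automatic, and the energy/fibre argument that keeps $|A^2 \cap S|$ large --- so the proof cannot be completed along the lines you describe without essentially reinventing them.
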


From basic results on approximate groups, derived from the fundamental work of Ruzsa and developed in \cite{helfgott-sl2,tao-noncommutative}, we can also describe the subsets $A \subset U_n(\C)$ such that $|A^3| \leq K|A|$. They satisfy exactly the same conclusion as in the above corollary. The sets $A$ with $|A^2|\leq K|A|$, on the other hand, do not have such a nice structure, although it is a direct consequence of the above that such sets are contained in at most $n^{Cn^3}K^{Cn}$ cosets of some connected abelian subgroup $S \subseteq \U_n(\C)$ (see the remark after the proof of Corollary \ref{main-corollary}).

 In the case $n=2$, Bourgain and Gamburd \cite{bourgain-gamburd} proved a much stronger local version of Theorem \ref{mainthm} in which they considered covering numbers $\mathcal{N}(A,\delta)$ for every resolution $\delta>0$, instead of merely counting the number of points in $A$ as we do. However, their approached was based on the sum-product theorem (as used for example in the work of Helfgott \cite{helfgott-sl2}) and does not seem to extend easily to the higher rank case. See nevertheless the recent announcement \cite{bourgain-gamburd2}.

  From the qualitative point of view, a much more general result than Theorem \ref{mainthm} is contained in the work of Hrushovski \cite{hrushovski} and in later joint work of the authors\footnote{Added during revision: Pyber and Szab\'o have recently extended their approach in \cite{pyber-szabo}, which appeared simultaneously with \cite{bgt-announcement}, to cover fields of characteristic zero as well as finite characteristic as in their original work.} and Tao \cite{bgt-announcement,bgt-paper,freiman-glc-note}: in particular, the rough structure of approximate subgroups of $\GL_n(\C)$ is now understood, with the bounds in \cite{bgt-paper} being polynomial in $K$ for fixed $n$ just as in Theorem \ref{mainthm}. We have decided however that it is nonetheless worth having the present argument in the literature, since it is completely different to these other arguments and considerably more elementary in that nothing is required by way of algebraic group theory, quantitative algebraic geometry or model theory. Apropos of the last point, our bounds are completely explicit, whereas those in \cite{bgt-paper} are not on account of the use of ultrafilters there. As they stand, we believe that the methods of \cite{bgt-paper} would give a bound of the form $O_n(K^{C^{n^2}})$ in Theorem \ref{mainthm}, with the $O_n(1)$ being ineffective. In principle\footnote{In the most recent version of their preprint \cite{pyber-szabo}, Pyber and Szab\'o record effective versions of all the algebro-geometric arguments used in their work. It is not immediately clear to us exactly what explicit bound this could possibly give in our main theorem.} all uses of ultrafilters in our papers with Tao could be replaced by effective algebraic geometry arguments, thereby giving an explicit dependence on $n$; however it is extremely unlikely that in so doing one would beat the exponential dependence in $n$ that we have attained in Theorem \ref{mainthm}. Moreover, the power of $K$ appearing in Theorem \ref{mainthm} is merely linear in $n$ rather than exponential. We do not make any claim that this dependence is sharp -- indeed we believe it possible that a bound of the form $O_n(K^C)$ is the truth in Theorem \ref{mainthm}, where $C$ is independent of $n$. The
implied constant certainly cannot be independent of $n$ and most grow faster than exponentially even in the case $K = 1$. This can be seen by taking $A \cong \mbox{Sym}(n)$, the symmetric group on $n$ letters.

The proof in this paper can be viewed as an approximate version of the standard proof of Jordan's theorem on finite subgroups of $\U_n(\C)$, which states that such subgroups $G$ have an abelian subgroup $H$ with $[G : H] = O_n(1)$.


In addition to these remarks we note that Theorem \ref{mainthm} can be used as a substitute for the so-called \emph{Solovay-Kitaev argument} (see the appendix to \cite{solovay-kitaev}) which features in the variant of Kleiner's proof of Gromov's theorem on groups of polynomial growth due to Shalom and Tao \cite{shalom-tao}. In fact the arguments of our paper offer a new elementary proof of the fact, traditionally derived from the Tits alternative, that finitely generated subgroups of $\U_n(\C)$ with polynomial growth are virtually abelian. While the Tits alternative implies the exponential growth of non-virtually abelian subgroup of $\U_n(\C)$, our arguments fall short of this. They do however give a super-polynomial lower bound on the size of a word ball or radius $r$ of the type $\exp(r^\alpha)$ for some $\alpha=\alpha(n)>0$. We will remark further on this connection in \S \ref{gromov-sec}.

\emph{Notation.} The letters $c,C$ stand for absolute constants; different instances of the notation may refer to different constants. All constants in this paper could be specified explicitly if desired.

\emph{Acknowledgements.} The authors thank Terence Tao for helpful conversations. We are also massively indebted to the anonymous referee, who paid a startling amount of attention to our work and also suggested a simplification of our argument. Some of this work was completed while the authors were attending the MSRI workshop on Ergodic Theory and Harmonic Analysis in the autumn of 2008.

\section{On Jordan's Theorem}\label{jordan-sec}

In a sense, the main idea of our paper is to take a proof of Jordan's theorem on finite subgroups of $\U_n(\C)$ and then modify it so that it works in the context of approximate groups too (note that a subgroup is precisely the same thing as a $1$-approximate group).

\begin{theorem}[Jordan \cite{jordan}]
Suppose that $A$ is a finite subgroup of $\U_n(\C)$. Then there is an abelian subgroup $A' \subseteq A$ with $[A : A'] \leq F(n)$. We can take\footnote{That is to say, our proof gives a bound of this form. Completely optimal values of $F(n)$ are known by more sophisticated arguments, the latest of which, due to M. Collins \cite{collins}, give the sharp bound $F(n)\leq (n+1)!$ for $n$ large enough and make use of the classification of finite simple groups. The slightly strange use of the letter $A$ in this statement is so that it may easily be compared with results in later sections.} $F(n) = n^{Cn^3}$ for some absolute constant $C$.
\end{theorem}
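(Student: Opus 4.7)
The plan is to mimic the classical proof of Jordan's theorem: use a commutator contraction estimate near the identity to find an abelian subgroup generated by elements close to $I$, and use a covering argument to bound its index. The whole argument is done by induction on $n$.

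The key commutator estimate is $\|[g,h] - I\| \leq 2\|g-I\|\|h-I\|$ for $g, h \in \U_n(\C)$ in operator norm, which follows from the identity $gh - hg = (g-I)(h-I) - (h-I)(g-I)$ together with the unitarity of $hg$ (so that $\|[g,h]-I\| = \|gh - hg\|$). Fix $\eps$ of order $n^{-c}$ for a suitable absolute constant $c$, let $N_\eps = \{U \in \U_n(\C) : \|U - I\| < \eps\}$, and set $A' = \langle A \cap N_\eps\rangle$. Two elements $g_1, g_2 \in A$ with $g_1 g_2^{-1} \in N_\eps$ lie in the same left coset of $A'$, so $[A : A']$ is bounded by the $\eps$-covering number of $\U_n(\C)$, which by a standard volume estimate on a compact manifold of real dimension $n^2$ is at most $(C/\eps)^{n^2} \leq n^{C'n^2}$.

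Next, I would produce an abelian subgroup of $A'$ of controlled index, by induction on $n$. Let $g \in A' \setminus \{I\}$ minimize $\|g - I\|$. For any generator $h \in A \cap N_\eps$, the commutator $[g,h]$ lies in $A'$ and satisfies $\|[g,h] - I\| \leq 2\eps\|g-I\| < \|g-I\|$ (for $\eps < 1/2$), so minimality forces $[g,h] = I$, hence $g \in Z(A')$. If $g$ is non-scalar, its eigenspace decomposition $\C^n = V_1 \oplus \cdots \oplus V_k$ with each $\dim V_j < n$ is preserved by $A'$, embedding $A' \hookrightarrow \U(V_1) \times \cdots \times \U(V_k)$; the inductive hypothesis applied in each factor yields an abelian subgroup of $A'$ of index at most $\prod_j F(\dim V_j)$. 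Combined with the covering bound, this gives the recursion $F(n) \leq n^{C'n^2} \prod_j F(\dim V_j)$, which closes at $F(n) = n^{Cn^3}$ (for $C$ large enough), since $\sum_j (\dim V_j)^3 \leq (n-1)^3 + 1$ when $k \geq 2$.

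The main obstacle is the scalar case of the induction: when the centralized element $g$ one extracts is a scalar matrix, its eigenspace decomposition is trivial and gives no reduction in dimension. To handle this, one separates off the scalar subgroup $A' \cap Z(\U_n(\C))$, which is automatically abelian, and applies the minimization argument to a non-scalar element of $A'$ (repeating until either such an element is found, forcing a genuine eigenspace decomposition, or $A'$ is contained in the scalar subgroup and is itself abelian); equivalently, one may descend to $\PU_n(\C)$. Ensuring that this bookkeeping combines with the covering bound to give the stated exponent $n^{Cn^3}$, rather than something worse, is the delicate point.
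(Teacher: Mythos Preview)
Your proposal is correct and follows essentially the same route as the paper: commutator contraction near the identity, a volume/covering bound of order $n^{Cn^2}$ for the index, eigenspace reduction via a central (or centralised) non-scalar element, and the recursion $F(n)\leq n^{Cn^2}\prod_j F(n_j)$ closed using $\sum_j n_j^3 \leq (n-1)^3+1$. For the scalar obstacle you flag, the paper's resolution is to take $\gamma$ minimal among the \emph{non-scalar} elements close to $I$ and note that the commutator $[\gamma,x]$ has determinant $1$, so if it is scalar and within distance $1/4\sqrt{n}$ of $I$ it must actually equal $I$---this disposes of the issue in one line and avoids any bookkeeping with $\PU_n$.
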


Jordan's original proof was a very ingenious variation on the theme of the celebrated classification of Plato's solids, and was mainly algebraic. The proof we give here however is mainly geometric. It is a slight variant, which we learned from the weblog of T.~Tao \cite{tao-blog}, on the classical proof of Jordan's theorem given for instance in \cite{curtis-reiner}, itself based on arguments of Bieberbach and Frobenius (see \cite{bieberbach,frobenius}). The argument relies on the basic fact that the commutator of two elements close to the identity in a Lie group is itself much closer to the identity. This idea has been used repeatedly ever since (it is nowadays also sometimes referred to as the Zassenhaus-Kazhdan-Margulis trick, see \cite[chap. 8]{raghunathan}) and is also the main tool in the Solovay-Kitaev algorithm \cite{solovay-kitaev} mentioned above.

We remark that Jordan's theorem actually applies to finite subgroups of $\GL_n(\C)$, but the first step of the proof is to apply Weyl's unitary trick to reduce to the unitary case. No analogue of this trick appears to be possible in the context of approximate groups.

Suppose then that $A$ is a finite subgroup of $\U_n(\C)$. The key observation is the following very well-known fact. The reader may also find a nice explanation of cognate ideas in the proof of \cite[Lemma 4.7]{gowers-quasirandom}.

\begin{lemma}[Element with large centraliser]\label{comm-lem} Suppose that $A \subseteq \U_n(\C)$ is a finite group. Then at least one of the following holds:
\begin{enumerate}
\item There is a subgroup $A' \leq A$ consisting of scalar multiples of the identity with $[A : A'] \leq n^{Cn^2}$;
\item there is an element $\gamma \in A$, not a scalar multiple of the identity, whose centraliser $C_A(\gamma) := \{x \in A: x\gamma = \gamma x\}$ has cardinality at least $n^{-Cn^2} |A|$.
\end{enumerate}
\end{lemma}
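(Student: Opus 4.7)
The plan is to adapt the classical Bieberbach--Frobenius commutator trick. The essential analytic input is: if $h,\gamma\in\U_n(\C)$ are both close to $I$ in operator norm then, writing $X=h-I$ and $Y=\gamma-I$, we have $h\gamma-\gamma h = XY - YX$, and since $(\gamma h)^{-1}$ is unitary we obtain
\[
\|[h,\gamma]-I\|_{op} \;=\; \|(h\gamma-\gamma h)(\gamma h)^{-1}\|_{op} \;\leq\; 2\|h-I\|_{op}\,\|\gamma-I\|_{op}.
\]
So the commutator is much closer to $I$ than either of its factors. The proof will combine this with a metric pigeonhole in $\U_n(\C)$ and a determinant trick that forces any rogue scalar commutators to cost only a factor of $n$.

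Fix a small absolute constant $\epsilon$ (say $\epsilon=1/10$) and set $N:=\{g\in A:\|g-I\|_{op}\leq\epsilon\}$. Since $\U_n(\C)$ has real dimension $n^2$, a standard volume argument covers it by at most $(C/\epsilon)^{n^2}\leq n^{Cn^2}$ operator-norm balls of radius $\epsilon/2$. Pigeonholing $A$ among these balls and left-translating by the inverse of an element in the heaviest ball (using bi-invariance of the operator norm under unitary multiplication) then gives $|N|\geq n^{-Cn^2}|A|$.

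If every element of $N$ is a scalar, let $Z\leq A$ denote the (central) subgroup of scalar matrices in $A$. For distinct cosets $aZ,bZ$ of $Z$ in $A$ we have $ab^{-1}\notin Z\supseteq N$, so $\|a-b\|_{op}=\|ab^{-1}-I\|_{op}>\epsilon$; hence coset representatives are $\epsilon$-separated in $\U_n(\C)$ and the same packing estimate yields $[A:Z]\leq n^{Cn^2}$, which is conclusion (i).

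Otherwise $N$ contains a non-scalar element, so we may choose a non-scalar $\gamma\in A$ minimising $\|\gamma-I\|_{op}$; this minimum is at most $\epsilon$. For any $h\in N$ the commutator estimate gives $\|[h,\gamma]-I\|_{op}\leq 2\epsilon\|\gamma-I\|_{op}<\|\gamma-I\|_{op}$, so $[h,\gamma]\in A$ is strictly closer to $I$ than $\gamma$; by minimality it must therefore be scalar. Writing $h\gamma h^{-1}=\lambda_h\gamma$ with $\lambda_h\in\C^\times$ and taking determinants forces $\lambda_h^n=1$, so $\lambda_h$ assumes at most $n$ distinct values as $h$ ranges over $N$. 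By pigeonhole some value $\lambda$ is attained by at least $|N|/n$ elements of $N$, and for any two such $h_1,h_2$ the element $h_2^{-1}h_1$ centralises $\gamma$. Hence $|C_A(\gamma)|\geq|N|/n\geq n^{-Cn^2}|A|$, which is conclusion (ii). The only delicate point is ruling out non-trivial scalar commutators in this last step, and the determinant argument that does so is precisely where the extra factor of $n$ in the final bound originates; everything else is standard covering-number and pigeonhole bookkeeping.
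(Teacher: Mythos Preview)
Your proof is correct and follows the same Bieberbach--Frobenius strategy as the paper: a volume-packing argument produces many elements of $A$ near the identity, one then selects a non-scalar $\gamma$ closest to $I$, and the commutator-shrinking inequality forces each $[h,\gamma]$ to be scalar. The only substantive difference is in how the scalar commutators are eliminated: the paper takes its neighbourhood radius to be $1/(4\sqrt{n})$ so that any nontrivial $n$-th root of unity times $I$ is already too far from $I$, forcing $[h,\gamma]=I$ outright, whereas you keep $\epsilon$ absolute, observe $\lambda_h^n=1$ via determinants, and pigeonhole on the $n$ possible values---losing a harmless factor of $n$ that is absorbed into $n^{Cn^2}$.
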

\begin{proof} Equip $\Mat_n(\C)$ with the Hilbert-Schmidt norm: take some orthonormal basis $e_1,\dots,e_n$ for $\C^n$ and define $\Vert M \Vert := (\sum_{i,j} |m_{ij}|^2)^{1/2}$, where the $m_{ij}$ are the matrix entries of $M$ with respect to this basis. It is well-known that this is an algebra norm, that is to say $\Vert M_1 M_2 \Vert \leq \Vert M_1\Vert \Vert M_2 \Vert$, and we shall use this fact several times. Every unitary matrix has norm $\sqrt{n}$, and the Hilbert-Schmidt norm is invariant under left and right multiplication by unitary matrices. Let $d$ be the distance induced by this norm, that is to say $d(x,y) := \Vert x - y \Vert$.

We claim that $A'$, the subset of $A$ consisting of all elements with distance at most $1/4\sqrt{n}$ from the identity, has cardinality at least $n^{-Cn^2}|A|$. To see this, observe that a simple volume-packing argument implies that $U_n(\C)$ may be covered by $n^{Cn^2}$ balls of the form $\{g \in U_n(\C) : d(g,g_0) \leq 1/4\sqrt{n}\}$. At least one of these balls contains at least $n^{-Cn^2} |A|$ elements of $A$. However for all of these elements $g$ we have
\[ \Vert gg_0^{-1} - \id \Vert = \Vert g - g_0 \Vert  \leq \textstyle 1/4\sqrt{n}.\]
This is establishes the claim. We now distinguish two cases.

\emph{Case 1.} Every element of $A'$ is a scalar multiple of the identity. Then we clearly have alternative (i) in the statement of the lemma.

\emph{Case 2.} At least one element of $A'$ is not a multiple of the identity.  Let $\gamma \in A'$ be that amongst the elements of $A'$ which are not scalar multiples of the identity for which $d(\gamma,\id)$ is minimal. Then if $x \in A'$ is arbitrary we have
\begin{align*} d([\gamma,x] , \id) & = d(\gamma x \gamma^{-1} x^{-1},\id) \\ & = \Vert  (\gamma - \id)(x - \id) - (x-\id)(\gamma-\id)\Vert  \\ & \leq 2\Vert  \gamma - \id \Vert  \Vert  x-\id \Vert  \\ & \leq  d(\gamma,\id)/2.\end{align*}
Since $A$ is a group, the commutator $[\gamma,x]$ is an element of $A$. If it is a scalar multiple of the identity then, since $\det [\gamma, x] = 1$, we must have $[\gamma,x] = e^{2\pi i r/n}\id$ for some $r \in \N$.

Note that if $r \neq 0$ we have
\begin{equation}\label{comp} d(e^{2\pi i r/n} \id, \id) = | e^{2\pi i r/n} - 1| n^{1/2} \geq |\sin (\pi /n)| n^{1/2} \geq 2/\sqrt{n}.\end{equation}Since $d([\gamma, x], \id) \leq 1/4\sqrt{n}$ this implies that $[\gamma, x] = \id$. If $[\gamma, x]$ is not a scalar multiple of the identity then, by the asserted minimality of $d(\gamma,\id)$, we are also forced to conclude that $[\gamma,x] = \id$. In either case we have established that $x$ commutes with $\gamma$, and hence the whole of $A'$ lies in the centraliser $C_A(\gamma)$. This is option (ii) in the statement of the lemma.\end{proof}

Let us recall now the following standard fact.

\begin{lemma}[Centralizers]\label{centraliser-fact}
Let $\gamma \in \U_n(\C)$ is not a multiple of the identity. Then the centraliser $C_{\U_n(\C)}(\gamma)$ is isomorphic to a direct product $\U_{n_1}(\C) \times \dots \times \U_{n_k}(\C)$, where $n_1 + \dots + n_k = n$ and $n_i < n$ for all $i$.
\end{lemma}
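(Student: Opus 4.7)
The plan is to use the spectral theorem for the unitary (hence normal) matrix $\gamma$. First I would diagonalise $\gamma$ in an orthonormal basis: write $\C^n = V_1 \oplus \cdots \oplus V_k$ as an orthogonal direct sum of the distinct eigenspaces of $\gamma$, with $\gamma$ acting on $V_i$ as multiplication by the scalar $\lambda_i$, the $\lambda_i$ being pairwise distinct. Set $n_i := \dim V_i$, so $n_1 + \cdots + n_k = n$. Since $\gamma$ is not a scalar multiple of the identity, there are at least two distinct eigenvalues, so $k \geq 2$ and consequently $n_i < n$ for every $i$.

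Next I would verify the standard fact that any $x \in \U_n(\C)$ commuting with $\gamma$ must preserve each eigenspace $V_i$. Indeed if $v \in V_i$ then $\gamma(xv) = x(\gamma v) = \lambda_i (xv)$, so $xv \in V_i$ as well; thus $x$ restricts to a linear automorphism of each $V_i$. Moreover, since $x$ is unitary on $\C^n$ and the $V_i$ are mutually orthogonal and $x$-invariant, the restriction $x|_{V_i}$ is itself unitary on $V_i$ (equipped with the induced Hermitian inner product).

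Conversely, any collection of unitary operators $x_i \in \U(V_i)$ assembles, by taking the orthogonal direct sum, into a unitary operator on $\C^n$ that commutes with $\gamma$ (since $\gamma$ is a scalar on each $V_i$). The map sending $x$ to the tuple $(x|_{V_1},\dots,x|_{V_k})$ is therefore a group isomorphism from $C_{\U_n(\C)}(\gamma)$ onto $\U(V_1) \times \cdots \times \U(V_k)$. Choosing orthonormal bases for each $V_i$ identifies $\U(V_i)$ with $\U_{n_i}(\C)$, yielding the desired isomorphism.

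There is no real obstacle here; the only thing to be a little careful about is noting that $\gamma$ being unitary (hence normal) is what gives the orthogonal eigenspace decomposition, which in turn is what forces the restrictions $x|_{V_i}$ to be unitary rather than merely invertible.
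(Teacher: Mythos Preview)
Your proof is correct and follows essentially the same approach as the paper's: both diagonalise the unitary matrix $\gamma$, identify the centraliser with the block matrices preserving the eigenspaces, and read off the dimensions $n_i$ as the eigenvalue multiplicities. The only cosmetic difference is that the paper first computes the centraliser in $\GL_n(\C)$ as $\GL_{n_1}(\C)\times\cdots\times\GL_{n_k}(\C)$ and then intersects with $\U_n(\C)$, whereas you work directly inside $\U_n(\C)$ using the orthogonality of the eigenspaces; the underlying argument is the same.
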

\begin{proof}
The matrix $\gamma$, being unitary, is diagonalisable. Its centraliser in $\GL_n(\C)$ may therefore be identified with $\GL_{n_1}(\C) \times \dots \times \GL_{n_k}(\C)$, where $n_1 + \dots + n_k = n$ and $n_i < n$; the integers $n_i$ are of course the multiplicities of the eigenvalues of $\gamma$. It is clear that the intersection of such a block subgroup with $\U_n(\C)$ is precisely $\U_{n_1}(\C) \times \dots \times \U_{n_k}(\C)$, and this completes the proof.
\end{proof}

We may now complete a proof of Jordan's theorem, proceeding by induction on the rank $n$. Supposing that $A$ is a finite subgroup of $\U_n(\C)$, we apply Lemma \ref{comm-lem}. If option (i) holds then we are done; otherwise, option (ii) holds and we have a subgroup $Z$ (the centraliser in $A$ of some $\gamma$, not a scalar multiple of the identity) of size at least $n^{-Cn^2}|A|$ and which is isomorphic to a subgroup of $\U_{n_1}(\C) \times \dots \times \U_{n_k}(\C)$ where $k \leq n$ and $n_i < n$ for all $i$. Writing $\pi_i : Z \rightarrow \U_{n_i}(\C)$ for projection onto the $i$th factor, it follows from the induction hypothesis that there is an abelian subgroup $Z_i \subseteq \pi_i(Z)$ with $[\pi_i(Z) : Z_i] \leq F(n_i)$. The subgroup $B=\cap_i \pi_i^{-1}(Z_i)\subset Z$ satisfies
\[ |B| \geq \frac{|Z|}{F(n_1)\dots F(n_k)} \geq \frac{n^{-Cn^2}}{F(n_1)\dots F(n_k)}|A|,\]
and is abelian. 
So Jordan's theorem follows provided that
\[ F(n) \geq F(n_1)\dots F(n_k) n^{-Cn^2}.\] That a function of the form $F(n) = n^{Cn^3}$ satisfies this inequality is an immediate consequence of the following elementary lemma.\hfill $\Box$

\begin{lemma}\label{elementary-lem}
Suppose that $n \geq 2$ and that $n_1,\dots,n_k$ are positive integers with $n_i < n$ for all $i$ and $n_1 + \dots + n_k = n$. Then
\[ n^3 > n_1^3 + \dots + n_k^3 + n^2.\]
\end{lemma}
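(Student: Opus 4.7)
The plan is to reduce to the extremal case where the partition of $n$ is as unbalanced as possible, since $\sum n_i^3$ is a convex-type quantity that gets larger the more concentrated the mass.

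First, I would argue that subject to the constraints $n_i \geq 1$, $n_i \leq n-1$, and $\sum n_i = n$, the sum $\sum n_i^3$ is maximized by the partition $(n-1, 1, 1, \dots, 1)$. The key one-line observation is that for integers $a, b \geq 1$ with $a \leq b$, replacing the pair $(a, b)$ by $(a-1, b+1)$ changes the sum of cubes by $(b+1)^3 + (a-1)^3 - b^3 - a^3 = 3(b^2 - a^2) + 3(b+a) \geq 0$, so such a replacement is non-decreasing. Iterating, one can push all but one of the $n_i$ down to $1$ and the remaining one up to $n-1$ without decreasing $\sum n_i^3$. (Strictly, one has to make sure the largest part never exceeds $n-1$, but since $\sum n_i = n$ and all parts are $\geq 1$, the largest part is at most $n - (k-1) \leq n-1$ throughout the process.)

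It therefore suffices to verify the inequality in this extremal case, i.e.\ to check that
\[ n^3 > (n-1)^3 + (k-1) + n^2 \]
for any $k \geq 2$. Since $k-1 \leq n-1$, it is enough to prove
\[ n^3 - (n-1)^3 - (n-1) - n^2 > 0. \]
Expanding, $n^3 - (n-1)^3 = 3n^2 - 3n + 1$, so the left side equals $2n^2 - 4n + 2 = 2(n-1)^2$, which is strictly positive whenever $n \geq 2$.

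The only step that requires any thought is the swapping argument, and even there the inequality $3(b^2 - a^2) + 3(a+b) \geq 0$ is immediate for positive integers $a \leq b$. There is no real obstacle; the lemma is genuinely elementary, and the only reason to isolate it as a separate lemma is to keep the inductive step in the proof of Jordan's theorem clean.
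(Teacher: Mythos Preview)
Your argument is correct and follows essentially the same route as the paper's proof (a swap/convexity argument to identify the extremal partition, then a direct check). One small slip: for fixed $k$ the extremal configuration reached by your swapping is $(n-k+1,1,\dots,1)$, not $(n-1,1,\dots,1)$---your own parenthetical already notes the largest part is at most $n-(k-1)$---but since $(n-k+1)^3\le(n-1)^3$ your displayed inequality $n^3>(n-1)^3+(k-1)+n^2$ is only stronger than needed, and your computation $2(n-1)^2>0$ finishes the job; the paper instead lets the swap annihilate parts so as to reduce to $k=2$ and then checks $n^3>(n-1)^3+1+n^2$.
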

\proof
It is immediate by convexity or direct verification that $(x-1)^3 + (y+1)^3 > x^3 + y^3$ whenever $x,y$ are positive integers. Thus the maximum value of $n_1^3 + \dots + n_k^3$ subject to the constraint $n_1 + \dots + n_k = n$ occurs when $k = 2$ and $n_1 = n-1$, $n_2 = 1$. The result then follows immediately from the inequality
\begin{equation}\boxeq n^3 = (n-1)^3 + 1 + n^2(3 - \frac{3}{n}) > (n-1)^3 + 1 + n^2.\end{equation}

\section{Approximate subgroups of the unitary group}\label{sec3}

We turn now to the proof of Theorem \ref{mainthm}. We do this by modelling the proof of Jordan's theorem given in \S \ref{jordan-sec}, starting with Lemma \ref{comm-lem}, the lemma which located an element with large centraliser. We saw in Lemma \ref{comm-lem} that multiples of the identity were slightly troublesome. To ease these issues we work for now with the \emph{special} unitary group $\SU_n(\C) := \{g \in \U_n(\C) : \det g = 1\}$.

\begin{lemma}[Element with large centraliser]\label{comm-lem-2}
Suppose that $A \subseteq \SU_n(\C)$ is a $K$-approximate group with $|A| > n$. Then there is an element $\gamma \in A^2$ which is not a multiple of the identity and commutes with at least $n^{-Cn^2} K^{-6}|A|$ elements of $A^2$. \end{lemma}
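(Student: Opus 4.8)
The plan is to imitate the proof of Lemma \ref{comm-lem}, replacing the pigeonhole-on-cosets step by a covering argument that is robust under passing from $A$ to $A^2$. First I would equip $\Mat_n(\C)$ with the Hilbert--Schmidt norm as before, and cover $\SU_n(\C)$ by $n^{Cn^2}$ balls of radius $1/4\sqrt n$; since $|A| > n$, at least one such ball meets $A$ in at least $n^{-Cn^2}|A|$ elements, and (translating by the inverse of one such element) we obtain a subset $A' \subseteq A^2$ with $|A'| \geq n^{-Cn^2}|A|$ all of whose elements lie within $1/4\sqrt n$ of $\id$. Crucially $A'$ is a subset of the approximate group $A^2$, not necessarily closed under multiplication, but the commutator estimate
\[ d([\gamma,x],\id) \;\leq\; 2\,\Vert \gamma - \id\Vert\,\Vert x - \id\Vert \;\leq\; d(\gamma,\id)/2 \]
holds for any $\gamma, x \in A'$ regardless, so $[\gamma,x]$ is an element of $A^6$ very close to the identity.

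Next I would select $\gamma \in A'$ not a scalar, of minimal distance to $\id$ among non-scalar elements of $A'$ (if every element of $A'$ is scalar, a separate easy argument applies: $A'$ lies in the centre, which in $\SU_n$ is finite of size $n$, forcing $|A| \leq n \cdot n^{Cn^2}$, and one can instead produce a non-scalar $\gamma$ in $A^2$ commuting with a large piece of $A^2$ by a different pigeonhole — actually it is cleaner to note $|A'| \le n$ then contradicts $|A'| \ge n^{-Cn^2}|A|$ only if $|A|$ is bounded, so one must handle the genuinely-scalar case by hand, e.g. reducing to a coset of the scalars). For every $x \in A'$, the element $[\gamma,x] \in A^6$ is within $1/4\sqrt n$ of $\id$; being in $A^6 \subseteq \SU_n$, if it is scalar it equals $e^{2\pi i r/n}\id$, and the computation \eqref{comp} shows $r = 0$, i.e. $[\gamma,x] = \id$. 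If $[\gamma,x]$ is non-scalar — here is the difficulty — I cannot immediately invoke minimality of $d(\gamma,\id)$, because $[\gamma,x]$ lies in $A^6$, not in $A'$ or even in $A^2$.

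The main obstacle is therefore this mismatch of levels: the commutator lands in $A^6$ but the minimality was chosen within $A' \subseteq A^2$. To resolve it I would use the approximate-group property to control the number of "bad" directions. Specifically, partition $A'$ according to the value of $[\gamma,x]$; each distinct non-scalar value $g = [\gamma,x]$ with $d(g,\id)$ small lies in $A^6$, and the set of such $g$ has size at most $|A^6| \leq K^4|A| \cdot (\text{something})$ — more usefully, the number of \emph{distinct} values of $[\gamma,x]$ that are within $1/4\sqrt n$ of $\id$ and lie in $A^6$ can be bounded, or alternatively one runs a second pigeonhole: among the $x \in A'$ for which $[\gamma,x]$ is non-scalar, the map $x \mapsto [\gamma,x]$ has fibres, and by the Ruzsa-type covering inequalities $|A^6| \le K^c |A|$, so many $x$ share the same commutator value $g_0$; then for two such $x_1, x_2$ the element $x_1 x_2^{-1}$ commutes with $\gamma$, giving a large subset of $A^2 \cdot A^{-2} = A^4$ in the centraliser — and one iterates/cleans up. The cleanest route (and, I suspect, the one the authors take) is: let $A'' = \{x \in A' : [\gamma,x]$ scalar$\}$; if $|A''| \geq \frac12|A'|$ we are done since $A'' \subseteq C_{A^2}(\gamma)$ after the \eqref{comp} argument; otherwise at least half of $A'$ maps under $x \mapsto [\gamma,x]$ into the non-scalar elements of $A^6$ near $\id$, and replacing $\gamma$ by a non-scalar commutator $[\gamma,x]$ of strictly smaller norm and repeating — this descent terminates because $d(\cdot,\id)$ halves each time but is bounded below by the minimal spacing of non-scalar elements of $\bigcup_k A^k$ near the identity, which one controls using $|A^{k}| \le K^{O(1)}|A|$ at each bounded stage. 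Tracking the powers of $K$ through a bounded number of steps yields the stated $K^{-6}$ (the exponent $6$ reflecting that $\gamma \in A^2$ and the centraliser is inside $A^2$, with the covering and descent costing the rest), and tracking the $n$-dependence reproduces the $n^{-Cn^2}$ factor from the initial ball count.
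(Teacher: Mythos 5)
You have correctly isolated the crux (the commutator $[\gamma,x]$ lands in a high power of $A$, so the minimality trick from Lemma \ref{comm-lem} cannot be quoted directly), and your proposed fix --- pigeonhole on the \emph{value} of $[\gamma,x]$, then observe that $x^{-1}y$ centralises $\gamma$ when $[\gamma,x]=[\gamma,y]$ --- is in fact the route taken by the referee's alternative argument recorded after the proof, not the paper's own proof, which is a Solymosi-style nearest-neighbour argument using the weak Besicovitch constant of $\Mat_n(\C)$. However, as written your proposal has a genuine gap at exactly the pivotal point: you never justify that the commutator map takes few values, and neither of the mechanisms you offer works. Pigeonholing against $|A^6|\leq K^5|A|$ gives nothing, because the domain $A'$ has size only about $n^{-Cn^2}|A|$, which is much \emph{smaller} than $K^5|A|$, so no collision is forced. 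Nor is it true in general that $A^6$ (in fact $[\gamma,x]\in A^8$ for $\gamma,x\in A^2$, a minor slip) has few elements within $1/4\sqrt{n}$ of the identity: a long progression-like $O(1)$-approximate group inside a one-parameter subgroup of $\SU_n(\C)$ lies entirely in an arbitrarily small ball about $\id$ and has $\sim |A|$ such elements, so there is no small codomain at that scale. The missing idea is to run the pigeonhole at scale $\frac{1}{2}\rho$, where $\rho$ is the minimal distance from $\id$ of the non-identity elements of $A'$: the commutator estimate places every $[\gamma,x]$ within $\frac{1}{2}\rho$ of $\id$, and the number $L$ of elements of a bounded power of $A$ that close to $\id$ satisfies $L\leq n^{Cn^2}K^{O(1)}$, because multiplying them by the elements of $A'$ yields $L|A'|$ \emph{distinct} elements of a bounded power of $A$ (a collision would produce an element of a bounded power nearer to $\id$ than the minimality of $\rho$ permits), whence $L|A'|\leq K^{O(1)}|A|$. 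Without this relative packing step the whole argument has no engine.

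Your fallback descent (``replace $\gamma$ by a non-scalar commutator of smaller norm and repeat'') also fails as claimed: each step pushes $\gamma$ into a higher power of $A$, the number of steps is not bounded a priori, and the non-scalar elements of $\bigcup_k A^k$ need have no positive distance from $\id$ (think of $A$ generating a dense subgroup), so the asserted termination, and with it the claimed bookkeeping yielding $K^{-6}$, is unsubstantiated; moreover a cardinality bound $|A^k|\leq K^{k-1}|A|$ does not by itself give any lower bound on distances --- converting it into one is again the packing step above. Two smaller points: the degenerate case is handled most cleanly by noting the lemma is trivial when $|A|\leq n^{Cn^2}$ (since $|A|>n$ forces a non-scalar $\gamma\in A$, which commutes with $\id\in A^2$), rather than by ``reducing to a coset of the scalars''; and even once the counting step is supplied, your argument produces commuting elements $x^{-1}y\in A^4$ rather than $A^2$, so like the referee you prove a slight variant of the stated lemma, which must then be massaged (e.g.\ via Lemma \ref{basic-group-fact}) or used in that form downstream.
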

\proof Since we are working in $\SU_n(\C)$, the only multiples of the identity are $e^{2\pi i r/n}\id$ with $r \in \N$. Since $|A| > n$, there certainly \emph{is} some $\gamma \in A^2$ which is not a multiple of the identity. Since $\gamma$ commutes with $\id$, which is an element of $A^2$, the lemma is trivial whenever $|A| \leq n^{Cn^2}$. Assume henceforth that
\begin{equation}\label{assumption} |A| > n^{Cn^2}.\end{equation} This is a variant of an argument pioneered by Solymosi \cite{solymosi} in the context of sum-product estimates for $\C$. For each $a \in A$ select an element $a^* \in A \setminus \{a\}$ which is nearest, or joint-nearest, to $a$ in the sense that $d(a , a^*) \leq d( a , a')$ for all $a' \in A$ (where $d$ is, as in the previous section, the distance induced by the Hilbert-Schmidt norm). Write $r_a := d(a,a^*)$. Consider the map $\psi : A \times A \times A \rightarrow A^2 \times A^2 \times A^2 \times A^2$ defined by
\[ \psi(a,a_1,a_2) := (a_1 a, a_1a^*, a a_2, a^* a_2).\]
It is certainly the case that $a_1 a$ is ``near''  $a_1 a^*$, and that $a a_2$ is ``near'' $a^* a_2$. If it was in fact the case that $a_1 a^*$ was the nearest point in $A^2$ to $a_1 a$, and $a^* a_2$ the nearest point in $A^2$ to $a a_2$, we would clearly have $|\im \psi| \leq |A^2|^2$. Since $|A^2| \leq K^2|A|$, it would follow that some fibre of $\psi$ has size at least $|A|/K$. But if $\psi(a,a_1,a_2) = \psi(b,b_1,b_2)$ then we have, of course, $a_1 a = b_1 b$, $a_1 a^* = b_1 b^*$, $a a_2 = b b_2$ and $a^* a_2 = b^* b_2$. Writing $\gamma := a^{-1} a^* = b^{-1} b^*$ we would have \[ a \gamma a^{-1} = a^* a^{-1} = (a^* a_2) (a a_2)^{-1} = (b^* b_2) (b b_2)^{-1} = b^* b^{-1} = b \gamma b^{-1}\] and hence $b^{-1}a \in C_G(\gamma)$. As a consequence,  $|C_G(\gamma) \cap A^2| \geq K^{-2}|A|$.

To turn this into a proof of the lemma we must resolve two issues. First, we need to ensure that $\gamma$ is not a multiple of the identity. Secondly and more seriously it will not, in general, be the case that $a_1 a^*$ is the nearest point in $A^2$ to $a_1 a$. Regarding this second point it turns out that something a little
weaker is true: for many triples $(a , a_1 , a_2 )$ there are not many points of $A^2$ closer to $a_1 a$ than $a_1 a^*$,  and not many points of $A^2$ closer to $a a_2$ than $a^* a_2$. In what follows, write $B_n$ for the \emph{weak Besicovitch constant} of $\Mat_n(\C)$; see Appendix \ref{bes-app} for a full discussion, and a proof that $B_n \leq C^{n^2}$.
We will examine \emph{well-behaved} triples $(a , a_1 , a_2)$ for which $a_1 a$ is ``almost'' the nearest neighbour of $a_1 a^*$ in $A^2$ in the sense that
\begin{equation}\label{3.1} U_{a,a_1} := |\{u \in  A^2 : d(a_1 a , u) \leq  r_a \}| \leq10 B_nK, \end{equation} for which $a a_2$ is ``almost'' the nearest neighbour of $a^* a_2$ in the sense that
\begin{equation}\label{3.2} V_{a,a_2} := |\{v \in A^2 : d(a a_2 ,v)\leq  r_a\}|\leq10 B_nK,\end{equation}
and for which
\begin{equation}\label{3.3} \mbox{$a^{-1} a^*$ is not a multiple of the identity}.\end{equation}
It is not obvious that there are any well-behaved triples, but we claim that this good behaviour
is quite generic in the sense that there are at least $|A|^3/2$ well-behaved triples.

Let us first count the triples $(a, a_1, a_2) \in A \times A \times A$ for which \eqref{3.3} is violated. Since we are working in $\SU_n(\C)$, the only multiples of the identity are $e^{2\pi i r/n}\id$ with $r \in \N$, and so by the same computation we used in \eqref{comp} we get $d(a^{-1} a^*, \id) \geq 2/\sqrt{n}$ and so $r_a = d(a, a^*) \geq 2/\sqrt{n}$. By a simple volume-packing argument the number of $a$ with this property is at most $n^{Cn^2}$, and so \eqref{3.3} is violated for at most $n^{Cn^2}|A|^2 < |A|^3/10$ triples $(a, a_1, a_2)$, this last inequality being a consequence of \eqref{assumption}.

Turning now to the examination of \eqref{3.1}, fix $a_1$. Then the open balls $B_{r_a}(a a_1 )$, $a \in A$, have the property that no centre $a a_1$ of one of these balls lies inside any other ball $B_{r_{a'}}(a'a_1)$. Indeed, if this were the case then we would have $d(a,a') < r_{a'}$, contrary to the assumption that $a^{\prime *}$ is the closest point of $A$ to $a'$. It follows from the definition of the weak Besicovitch constant $B_n$ that no point $u \in \Mat_n(\C)$ can lie in more than $B_n$ of these balls. It follows that
\[ \sum_{a} U_{a, a_1} \leq B_n|A^2| \leq B_nK |A|.\]
An essentially identical argument using \eqref{3.2} implies that
\[ \sum_{a} V_{a,a_2} \leq B_n|A^2| \leq B_nK |A|.\]
The number of pairs $(a, a_1)$ for which $U_{a,a_1} \geq 10 B_n K$ is thus at most $|A|^2/10$, as
is the number of pairs $(a, a_2)$ for which $V_{a ,a_2}\geq 10 B_nK$. It follows from this that there are at least $|A|^3/2$ well-behaved triples, as claimed.

Let us now consider the map $\psi$ defined above, \[ \psi(a,a_1,a_2) = (a_1 a, a_1 a^*, aa_2,a^* a_2),\] restricted to this set $S$ of at least $|A|^3/2$ well-behaved triples. We claim that $\im(\psi|_S)$ is reasonably small; this implies that $\psi$ has a large fibre, and we may then conclude as in the simplified sketch above.

Suppose, then, that $(x,y,z,w) \in \im(\psi|_S)$. There are at most  $|A^2|$ choices for $x$, and the same for $z$. Once these have been specified, consider the possible choices for $y$. Single out one of these, $\overline{y}$, corresponding to the well-behaved triple $(\overline{a},\overline{a}_1,\overline{a}_2)$ with $d( x , \overline{y}) = d( \overline{a} ,\overline{a}^*)$ maximal. Then for all permissible $y$ we have
\[ d(\overline{a}_1 \overline{a} , y) = d(x , y)\leq d( x ,\overline{y}) = d(\overline{a}_1 \overline{a}, \overline{a}_1 \overline{a}^*) = d( \overline{a} ,\overline{a}^*) = r_a.\]
Since $(\overline{a},\overline{a}_1,\overline{a}_2)$ is a well-behaved triple, it follows from \eqref{3.1} that there are at most $10 B_n K$ choices for $y$. Similarly, there are at most $10 B_n K$ choices for $w$. It follows that
\[ |\im(\psi|_S)| \leq (10 B_n K)^2 |A^2|^2,\] and so $\psi$ has a fibre of size at least $C^{-n^2}K^{-6}|A|$. By precisely the same argument used in the informal discussion at the start of the proof, this implies the result.\endproof\vspace{8pt}

\emph{Added in revision.} Upon seeing our paper, and in particular noting our idea of mimicing the proof of Jordan's theorem in the approximate group setting, the referee came up with an elegant and simpler argument for proving (a very slight variant of) this pivotal lemma which he or she was generous enough to share with us. We sketch this now. First of all look at $A'$, the elements of $A^2$ at distance at most $1/4\sqrt{n}$ from the identity. As remarked above, none of these are multiples of the identity. By a simple volume-packing argument, $|A'| \geq n^{-Cn^2}|A|$. Let $\rho$ be the minimum value of $d(\gamma, \id)$ over all $\gamma \in A^{\prime}$. Suppose that there are $L$ elements $\gamma' \in A^{\prime 4}$ with \begin{equation}\label{gamma-sat} d(\gamma',\id) < \textstyle\frac{1}{2}\rho.\end{equation} Then, multiplying by the elements of $A'$ and using the minimality of $\rho$, we obtain the inequality $|A^{\prime 6}| \geq L|A'|$. Since $A$ is a $K$-approximate group we have $|A^6| \leq K^5 |A|$, and therefore $L \leq n^{Cn^2} K^5$.

However, by the inequalities noted in Case 2 of the proof of Lemma \ref{comm-lem}, any commutator $\gamma' = [\gamma,x]$, $x \in A'$, will satisfy \eqref{gamma-sat}. It follows that there are merely $n^{Cn^2} K^5$ different values taken by this commutator, and hence there is some further set $A'' \subseteq A'$, $|A''| \geq n^{-Cn^2} K^{-5}|A|$, such that $[\gamma, x] = [\gamma, y]$ whenever $x,y \in A''$. A very short computation confirms that $x^{-1}y$ centralises $\gamma$ for any such pair $x,y$, and this concludes the proof.\vspace{8pt}

\emph{Remark.} We note that this argument of the referee uses slightly less than our original one, in that only the bounded doubling of balls in the Hilbert-Schmidt norm is required, as opposed to the rather more subtle Besicovitch property. \vspace{8pt}

A consequence of Lemma \ref{comm-lem-2}, proven below, is the following.

\begin{corollary}\label{corol}
Suppose that $A \subseteq \U_n(\C)$ is a $K$-approximate group. Then either there is a coset $xZ$ of the centre $Z\cong \U_1(\C)\subset \U_n(\C)$ such that $|A\cap xZ|\geq n^{-1}|A|$, or there is an element $\gamma \in A^2$ which is not a multiple of the identity and commutes with at least $n^{-Cn^2} K^{-11}|A|$ elements of $A^2$.
\end{corollary}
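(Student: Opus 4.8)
The plan is to mimic the reduction that, in the proof of Jordan's theorem, follows Lemma \ref{comm-lem}, but with the extra twist that Lemma \ref{comm-lem-2} is available only for $\SU_n(\C)$ while our $A$ sits in $\U_n(\C)$. Write $Z\cong\U_1(\C)$ for the centre and $q\colon\U_n(\C)\to\U_n(\C)/Z=\PU_n(\C)$ for the quotient. Two simple facts will organize the argument. First, $q(A)$ is a $K$-approximate subgroup of $\PU_n(\C)$, and $\PU_n(\C)$ is the quotient of $\SU_n(\C)$ by the finite central subgroup $\mu_n$ of $n$-th roots of unity. Second, every centraliser $C_{\U_n(\C)}(\gamma)$ contains $Z$, hence is a union of cosets of $Z$; so each fibre of $q|_{A^2}$ is either entirely inside or entirely outside $C_{\U_n(\C)}(\gamma)$, and $|C_{\U_n(\C)}(\gamma)\cap A^2|=\sum_{z}|q^{-1}(z)\cap A^2|$, the sum over those $q$-fibres $z$ lying in the centraliser. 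Before anything else one disposes of trivial cases: if some coset $xZ$ meets $A$ in at least $n^{-1}|A|$ points we are in the first alternative, and if $|A|\leq n^{Cn^2}K^{11}$ the claimed bound is $\leq 1$, so any non-central element of $A^2$ (one exists unless $A\subseteq Z$, which is the first alternative) suffices. So assume $|A|$ is large and $|A\cap xZ|<n^{-1}|A|$ for all $x$; in particular $|q(A)|>n$.

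Next I would set $\hat A:=\pi^{-1}(q(A))\subseteq\SU_n(\C)$, where $\pi\colon\SU_n(\C)\to\PU_n(\C)$ has kernel $\mu_n$. Since $\mu_n$ is central, $\hat A$ is a $(Kn)$-approximate subgroup of $\SU_n(\C)$ with $|\hat A|=n|q(A)|>n$, so Lemma \ref{comm-lem-2} applies and yields $\hat\gamma\in\hat A^2$, not a scalar, with $|C_{\SU_n(\C)}(\hat\gamma)\cap\hat A^2|\geq n^{-Cn^2}K^{-6}|q(A)|$ (absorbing powers of $n$ into $n^{-Cn^2}$). Because $\hat A^2\subseteq\pi^{-1}(q(A^2))$, every $\hat x$ in this commuting set is of the form $\mu\xi$ with $\mu\in Z$ and $\xi\in A^2$, and likewise $\hat\gamma=\mu_0\alpha$ with $\alpha\in A^2$ non-central (a scalar times a non-scalar); the central scalars cancel in the commutation relation, so $\xi\in C_{\U_n(\C)}(\alpha)\cap A^2$. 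The map $\hat x\mapsto\xi$ is at most $n$-to-one, since $\mu$ must be an $n$-th root of $\det(\xi)^{-1}$. Hence one obtains $\alpha\in A^2$ non-central with $|C_{\U_n(\C)}(\alpha)\cap A^2|\geq n^{-Cn^2}K^{-6}|q(A)|$; moreover the $\xi$'s found here have at least that many distinct $q$-values, so in fact at least $n^{-Cn^2}K^{-6}|q(A)|$ \emph{whole} $q$-fibres of $A^2$ lie in $C_{\U_n(\C)}(\alpha)$, a point that will matter below.

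This already settles the second alternative when $|q(A)|$ is comparable to $|A|$. For the complementary regime I would exploit the ``balance'' inequality $|q(A)|\cdot\max_y|A\cap yZ|\leq K|A|$, which follows by noting that for a fixed representative $x_0$ of a largest centre-coset the products $(A\cap x_0Z)(A\cap yZ)\subseteq A^2$ lie in the pairwise distinct cosets $q(x_0)q(y)$ and each has at least $|A\cap x_0Z|$ elements (here $Z$ is central, so $A\cap x_0Z$ is a commuting set and its square lies in a single coset $x_0^2Z$). When a centre-coset $x_0Z$ is large, the abelian structure inside it gives a $\gamma$ for free: take $\gamma$ to be a non-central element of $(A\cap x_0Z)^2$ if $x_0^2\notin Z$, or $\gamma=x_0\mu_0\in A$ (non-central, since $x_0$ is) if $x_0^2\in Z$; then $C_{\U_n(\C)}(\gamma)=C_{\U_n(\C)}(x_0)$ or $C_{\U_n(\C)}(x_0^2)$ contains $A\cap x_0Z$ and $(A\cap x_0Z)^2$, so $|C_{\U_n(\C)}(\gamma)\cap A^2|\geq|A\cap x_0Z|$. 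Thus one side of the argument gives $\gtrsim\max_y|A\cap yZ|$ and the other $\gtrsim n^{-Cn^2}K^{-6}|q(A)|$, and $\max_y|A\cap yZ|\cdot|q(A)|\geq|A|$.

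The hard part is reconciling these two bounds: individually their maximum only guarantees $\gtrsim n^{-Cn^2}K^{-C}|A|^{1/2}$, so in the intermediate regime where the centre-cosets are ``medium sized'' something more is needed. The intended resolution, I believe, is to use the $Z$-invariance of centralisers together with the balance inequality, which forces a positive proportion of the centre-cosets to have comparable and large size: one then tries to show that among the $\geq n^{-Cn^2}K^{-6}|q(A)|$ whole $q$-fibres of $A^2$ inside $C_{\U_n(\C)}(\alpha)$, a proportion are ``fat'' (expressible as $q(x_0)q(y)$ with $|A\cap yZ|$ large), each contributing $\gtrsim|A|/|q(A)|$ elements and thereby recovering $n^{-Cn^2}K^{-11}|A|$. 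The genuinely awkward configuration is the one where $A$ is clustered into many tiny, internally dense centre-cosets that are far apart (so nearly all nearest neighbours are intra-coset and no single coset is large); here I would model the clustered part of $A$ by a finite subgroup of $\U_n(\C)$ and apply Lemma \ref{comm-lem} (equivalently, Jordan's theorem from \S\ref{jordan-sec}) to it, which produces a non-central element with a centraliser containing a proportion $n^{-Cn^2}$ of the whole group. Carrying out this case analysis with the constants matching to give exactly $n^{-Cn^2}K^{-11}|A|$ is where essentially all of the remaining work lies; the rest is bookkeeping.
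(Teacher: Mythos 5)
Your setup and the first two paragraphs track the paper's argument closely and correctly: you pass to $\hat A := \pi^{-1}(q(A)) \subseteq \SU_n(\C)$, observe that it is an $O(nK)$-approximate group of size $n|q(A)| > n$, apply Lemma~\ref{comm-lem-2}, and descend the commuting set back down to $\U_n(\C)$ at the cost of a factor of $n$. Up to this point everything is fine, and the conclusion you correctly reach is that there is a non-central $\alpha \in A^2$ and at least $n^{-Cn^2}K^{-6}|q(A)|$ distinct $Z$-cosets of $A^2$ contained in $C_{\U_n(\C)}(\alpha)$.

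The gap is in converting ``many whole $q$-fibres of $A^2$'' into ``many elements of $A^2$'', and it is genuine: you cannot close it by a case analysis on whether $|q(A)|$ is comparable to $|A|$ or by the informal clustering picture at the end, because the fibres of $A^2$ that happen to land inside $C(\alpha)$ have no reason to be the fat ones. The tool the paper uses at exactly this juncture is Lemma~\ref{lift-lemma}: if a $\delta$-fraction of $\pi(B)$ lands in a set $X$, then a $\delta$-fraction of $B^3$ lands in $\pi^{-1}(X)$, because one may fatten \emph{every} fibre of $B$ over $\pi(B)$ by multiplying by a maximal fibre of $B^2$ over the identity. Applying this with $B = A^2$ and $X = \pi(C(\alpha))$ — using precisely your observation that $C(\alpha)$ is a union of $Z$-cosets, so $\pi^{-1}(X) = C(\alpha)$ — gives $|A^6 \cap C(\alpha)| \geq n^{-Cn^2}K^{-6}|A|$, and then Lemma~\ref{basic-group-fact} with $H = C(\alpha)$ trades $A^6$ for $A^2$ at the cost of a factor $K^5$, yielding the claimed $n^{-Cn^2}K^{-11}|A|$. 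Your ``balance inequality'' and the Jordan-theorem fallback for the clustered regime are not needed and, as written, would not close the intermediate case you yourself flag; the pigeonhole-on-fibres lemma is the missing idea.
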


In the proof of this corollary and elsewhere we require two lemmas concerning the behaviour of approximate groups under intersections and homomorphisms. Related results appear in work of Helfgott \cite{helfgott-sl3} and later papers such as \cite{bg1,bg2,tao-solvable}.

\begin{lemma}\label{basic-group-fact}
Let $K\geq 2$ be a parameter and let $A$ be a $K$-approximate subgroup of $G$. Let $H \leq G$ be a subgroup. Then $A^2 \cap H$ is a $2K^3$-approximate group and $|A^{k} \cap H| \leq K^{k-1}|A^2 \cap H|$ for every $k\geq 1$.
\end{lemma}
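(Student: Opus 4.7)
The plan is to reduce both statements to a single covering argument. First, by iterating the defining inclusion $A^2 \subseteq XA$ (with $|X| \leq K$), I would establish by induction that for every $k \geq 1$ there is a set $X_k \subseteq G$ with $|X_k| \leq K^{k-1}$ and $A^k \subseteq X_k A$. The inductive step is simply $A^{k+1} = A^k \cdot A \subseteq X_k A \cdot A \subseteq X_k X A$, so one may take $X_k = X^{k-1}$; the base cases $k=1,2$ are immediate (with $X_1 = \{e\}$).

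Next, for the bound $|A^k \cap H| \leq K^{k-1}|A^2 \cap H|$, I would partition $A^k \cap H$ along the covering $A^k \subseteq X_k A$. The key observation is that whenever $xA \cap H \neq \emptyset$, choosing some $h_x \in xA \cap H$ and writing $h_x = xa$ with $a \in A$ gives $x = h_x a^{-1}$, hence $xA = h_x a^{-1} A \subseteq h_x A^2$. Since $h_x \in H$, we get
\[ xA \cap H \;\subseteq\; h_x A^2 \cap H \;=\; h_x\bigl(A^2 \cap H\bigr), \]
so $|xA \cap H| \leq |A^2 \cap H|$. Summing over the at most $|X_k| \leq K^{k-1}$ values of $x$ for which the intersection is nonempty yields the claim; the cases $k=1,2$ follow already from $A \subseteq A^2$.

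Finally, for the approximate group statement, $A^2 \cap H$ is visibly symmetric and contains the identity since $A$ does and $H$ is a subgroup. To check the covering condition, note that $(A^2 \cap H)^2 \subseteq A^4 \cap H$. Specialising the argument above to $k=4$ exhibits a set $Y = \{h_x : x \in X_4, xA \cap H \neq \emptyset\} \subseteq H$ of size at most $K^3$ such that $A^4 \cap H \subseteq Y(A^2 \cap H)$. Replacing $Y$ by $Y \cup Y^{-1}$ (which is still contained in $H$, and still covers since $A^2 \cap H$ is symmetric) produces a symmetric covering set of cardinality at most $2K^3$, as required by the definition of approximate group.

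The whole argument is a routine instance of Ruzsa-style covering bookkeeping; there is no serious obstacle. The one point that requires a little care is the choice of the covering elements $h_x$ from inside $H$ rather than arbitrary representatives in $G$, which is precisely what allows the translated copies of $A^2$ to respect the subgroup $H$ and hence what converts a cover of $A^k$ by translates of $A$ into a cover of $A^k \cap H$ by translates of $A^2 \cap H$.
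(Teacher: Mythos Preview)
Your proof is correct and follows essentially the same approach as the paper: iterate $A^2 \subseteq XA$ to get $A^k \subseteq X^{k-1}A$, then observe that each nonempty slice $xA \cap H$ is contained in a single $H$-translate of $A^2 \cap H$ by choosing a representative $h_x \in xA \cap H$, and finally specialise to $k=4$ and symmetrise the covering set. The paper phrases the key step slightly differently (noting that $y_1^{-1}y_2 \in A^2 \cap H$ for any $y_1,y_2 \in gA \cap H$), but this is the same observation as your $xA \cap H \subseteq h_x(A^2 \cap H)$.
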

\begin{proof} Let $X$, $|X| \leq K$, be as in the definition of approximate group. Then, for any positive integer $k$, we have \begin{equation}\label{containment}A^{k} \subseteq X^{k-1}A.\end{equation}
Now if $g \in G$  and $y_1,y_2 \in gA \cap H$ then $y_1^{-1} y_2 \in A^2 \cap H$. It follows that
\[ gA \cap H \subseteq y(A^2 \cap H)\]
for any $y \in gA \cap H$ (or, if $gA \cap H$ happens to be empty, for any $y$ at all).
Let $Y$ be a set consisting of one such value of $y$ for each choice of $g \in X^{k-1}$. It follows from the preceding discussion and \eqref{containment} that
\[ A^k \cap H \subseteq Y(A^2 \cap H).\] This confirms the second statement of the lemma. Taking $k = 4$ and noting that $(A^2 \cap H)^2 \subseteq A^4 \cap H$ gives
\[ (A^2 \cap H)^2 \subseteq Y(A^2 \cap H).\]
Since $A^2 \cap H$ is symmetric, this implies that
\[ (A^2 \cap H)^2 \subseteq (A^2 \cap H)Y^{-1}.\]
This confirms that $A^2 \cap H$ is a $2K^3$-approximate group, with covering set $Y \cup Y^{-1}$.
\end{proof}
\begin{lemma}\label{lift-lemma}
Suppose that $A$ is a symmetric set in some group $G$, and that $\pi : G \rightarrow G'$ is a homomorphism from $G$ into some other group $G'$. Suppose that $X \leq G'$ is a set and that $|\pi(A) \cap X| = \delta |\pi(A)|$. Then $|A^3 \cap \pi^{-1}(X)| \geq \delta |A|$.
\end{lemma}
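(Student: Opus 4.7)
The plan is to partition $A$ into fibres of $\pi$ and exhibit an injection from a suitable product into $A^3 \cap \pi^{-1}(X)$, thereby converting the multiplicative lower bound on $|\pi(A) \cap X|$ into one on $|A^3 \cap \pi^{-1}(X)|$ weighted by typical fibre size.

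For $y \in \pi(A)$ write $A_y := A \cap \pi^{-1}(y)$, so that $A = \bigsqcup_{y \in \pi(A)} A_y$ and $|A| = \sum_{y \in \pi(A)} |A_y|$. Let $y_0 \in \pi(A)$ be a value at which $|A_y|$ is maximal, and pick any $a_0 \in A_{y_0}$; by pigeonhole $|A_{y_0}| \geq |A|/|\pi(A)|$. For each $y \in \pi(A) \cap X$ also pick one element $b_y \in A_y$.

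Now consider the map $\Phi : (\pi(A) \cap X) \times A_{y_0} \to G$ defined by $\Phi(y,a) := b_y a_0^{-1} a$. Since $A$ is symmetric, each of $b_y$, $a_0^{-1}$ and $a$ lies in $A$, so $\Phi$ takes values in $A^3$. Moreover $\pi(\Phi(y,a)) = y \cdot y_0^{-1} \cdot y_0 = y \in X$, so in fact $\Phi$ lands in $A^3 \cap \pi^{-1}(X)$. Finally, if $\Phi(y,a) = \Phi(y',a')$, applying $\pi$ gives $y = y'$, and then the equation $b_y a_0^{-1} a = b_y a_0^{-1} a'$ forces $a = a'$; so $\Phi$ is injective.

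Combining these observations yields
\[ |A^3 \cap \pi^{-1}(X)| \geq |\pi(A) \cap X| \cdot |A_{y_0}| \geq \delta |\pi(A)| \cdot \frac{|A|}{|\pi(A)|} = \delta |A|, \]
which is the claimed inequality. There is no substantive obstacle here; the only conceptual point is that a naive lift of $\pi(A) \cap X$ would only produce $\delta|\pi(A)|$ elements of $A \cap \pi^{-1}(X)$, and one must multiply by a whole fibre in order to recover the missing factor $|A|/|\pi(A)|$, which is exactly why the conclusion must be stated for $A^3$ rather than $A$.
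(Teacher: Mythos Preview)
Your proof is correct and follows essentially the same approach as the paper: letting $M = |A_{y_0}|$ be the maximal fibre size, the paper observes that $A^2$ has a fibre of size $\geq M$ over the identity and hence $A^3$ has a fibre of size $\geq M$ over every point of $\pi(A)$, which is precisely what your explicit injection $\Phi(y,a) = b_y a_0^{-1} a$ establishes. The only difference is presentational --- you spell out the bijection while the paper argues in terms of fibre cardinalities.
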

\begin{proof}
Let $M$ be the size of the largest fibre of $A$ above $G'$, that is to say $\max_x |A \cap \pi^{-1}(x)|$. Then $A^2$ has a fibre of size at least $M$ over $\mbox{id}_{G'}$, and thus $A^3$ has a fibre of size at least $M$ over each point of $\pi(A)$. In particular,
\[ |A^3 \cap \pi^{-1}(X)| \geq M|\pi(A) \cap X| \geq M\delta |\pi(A)|.\]
On the other hand it is clear that $|A| \leq M|\pi(A)|$. Combining these two inequalities leads to the stated bound.
\end{proof}

\noindent \emph{Proof of Corollary \ref{corol}.} Let $\pi$ be the projection $\U_n(\C) \rightarrow \PU_n(\C)$ whose kernal $\ker(\pi)=Z$ is the centre of $\U_n(\C)$. Let $A':=\pi^{-1}(\pi(A))\cap \SU_n(\C)=AZ \cap \SU_n(\C)$. Note that $|A'|\geq |\pi(A)|$. If $|\pi(A)|\leq n$, then there is a coset $xZ$ such that $|A \cap xZ| \geq n^{-1}|A|$. If not then $|A'| > n$ and so Lemma \ref{comm-lem-2} applies (with $A'$ in place of $A$) and we obtain an element $\gamma$ in $A^{\prime 2}$, not a multiple of the identity, such that
\[ |C_{\U_n(\C)}(\gamma) \cap A^{\prime 2}| \geq n^{-Cn^2} K^{-6}|A'|.\]
Pushing this forward under $\pi$ and noting that fibres of $\pi$ in $\SU_n(\C)$ have size at most $n$, we obtain
\[ |\pi(A^2)\cap \pi(C_{\U_n(\C)}(\gamma))|\geq n^{-C'n^2} K^{-6}|\pi(A)|.\] It follows from Lemma \ref{lift-lemma} that \[ |A^6 \cap C_{\U_n(\C)}(\gamma)|\geq n^{-C'n^2} K^{-6}|A|,\] and hence from Lemma \ref{basic-group-fact} that
\[ |A^2 \cap C_{\U_n(\C)}(\gamma)|\geq n^{-C'n^2} K^{-11}|A|.\]
This concludes the proof.\endproof

We have established an ``approximate'' analogue of Lemma \ref{comm-lem}. It remains to complete the proof of Theorem \ref{mainthm}, and we do this by proceeding in a manner rather analogous to that at the end of \S \ref{jordan-sec}, that is to say by induction on $n$.

To make this work efficiently, we prove the following statement.

\begin{lemma}\label{inductive-step}
Suppose that $A$ is a $K$-approximate subgroup of some group $G$ group \textup{(}which, in applications, will be a unitary group\textup{)}. Let $H \leq G$ be a subgroup isomorphic to $\U_{n}(\C) \times H_0$ for some group $H_0$ and some $n \geq 2$, and suppose that $|A \cap H x| \geq \delta |A|$ for some $\delta > 0$ and some coset $H x$. Then there is a further subgroup $H'$, isomorphic to $\U_{n_1}(\C) \times \dots \times \U_{n_k}(\C) \times \U_1(\C) \times H_0$ where $n_i < n$ for all $i$ and $n_1 + \dots + n_k = n$, together with an $x'$ such that $|A \cap H' x'| \geq n^{-Cn^2} \delta K^{-C}|A|$.
\end{lemma}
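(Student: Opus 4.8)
The plan is to run the inductive scheme from the end of \S\ref{jordan-sec}, with Corollary \ref{corol} replacing Lemma \ref{comm-lem} and with Lemmas \ref{basic-group-fact} and \ref{lift-lemma} used to shuttle between powers of $A$ and intersections of $A$ with subgroups. First I would arrange that $A$ is (up to a small loss) \emph{inside} $H$, not just inside a coset: picking $a_0\in A\cap Hx$, the set $a_0^{-1}(A\cap Hx)$ lies in $A^2\cap H$ and has size $\geq\delta|A|$, so $|A^2\cap H|\geq\delta|A|$. By Lemma \ref{basic-group-fact}, $\widetilde A:=A^2\cap H$ is a $2K^3$-approximate subgroup of $H\cong\U_n(\C)\times H_0$. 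Let $p:H\to\U_n(\C)$ be projection onto the first factor; since $p$ is a surjective homomorphism, $p(\widetilde A)$ is a $2K^3$-approximate subgroup of $\U_n(\C)$, and Corollary \ref{corol} applies to it.

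The substantive case is the centraliser alternative of Corollary \ref{corol}: we obtain $\gamma\in p(\widetilde A)^2$, not a scalar, with $|C_{\U_n(\C)}(\gamma)\cap p(\widetilde A)^2|\geq n^{-Cn^2}K^{-C}|p(\widetilde A)|$. Since $p(\widetilde A)$ is $2K^3$-approximate we have $|p(\widetilde A)^2|\leq 2K^3|p(\widetilde A)|$, so a proportion $n^{-Cn^2}K^{-C}$ of $p(\widetilde A)^2=p(\widetilde A^2)$ lies in $C:=C_{\U_n(\C)}(\gamma)$. Applying Lemma \ref{lift-lemma} to the symmetric set $\widetilde A^2$ and the homomorphism $p$ gives $|\widetilde A^6\cap p^{-1}(C)|\geq n^{-Cn^2}K^{-C}|\widetilde A^2|\geq n^{-Cn^2}K^{-C}\delta|A|$. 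Now set $H':=p^{-1}(C)=C\times H_0$, which by Lemma \ref{centraliser-fact} is isomorphic to $\U_{n_1}(\C)\times\dots\times\U_{n_k}(\C)\times H_0$ with $n_i<n$ and $n_1+\dots+n_k=n$; moreover $\widetilde A^6=(A^2\cap H)^6\subseteq A^{12}$, so $|A^{12}\cap H'|\geq n^{-Cn^2}K^{-C}\delta|A|$. Lemma \ref{basic-group-fact} applied to the subgroup $H'$ with $k=12$ then gives $|A^2\cap H'|\geq n^{-Cn^2}K^{-C}\delta|A|$, and finally a pigeonhole over the left cosets of $H'$ (partition $A=\bigsqcup_i(A\cap x_iH')$ with $f_i:=|A\cap x_iH'|$; every element of $A^2\cap H'$ is of the form $a^{-1}b$ with $a,b$ in a common $A\cap x_iH'$, so $|A^2\cap H'|\leq\sum_i f_i^2\leq(\max_i f_i)|A|$) produces a coset $x'H'$ with $|A\cap x'H'|\geq n^{-Cn^2}K^{-C}\delta|A|$; replacing $x'$ by its inverse and using that $A$ is symmetric turns this into a right coset, as required.

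For the central alternative of Corollary \ref{corol}, $p(\widetilde A)$ is concentrated, to within a factor $n$, in a coset of the centre $Z\cong\U_1(\C)$ of $\U_n(\C)$; equivalently $|p(\widetilde A)^2\cap Z|\geq n^{-1}|p(\widetilde A)|$. If $p(\widetilde A)$ contains a non-scalar element $\gamma_0$, then $C_{\U_n(\C)}(\gamma_0)\supseteq Z$, so $\gamma_0$ already commutes with the large set $p(\widetilde A)^2\cap Z$ and we are back in the centraliser case with $\gamma=\gamma_0$. Otherwise $p(\widetilde A)\subseteq Z$, so $\widetilde A\subseteq Z\times H_0$ and we may take $H'$ to be (a maximal torus of $\U_n(\C)$, which contains $Z$, times $H_0$), running the same pigeonhole as above. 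The one point needing care is purely bookkeeping: matching the group one lands in to the prescribed form $\U_{n_1}(\C)\times\dots\times\U_{n_k}(\C)\times\U_1(\C)\times H_0$ (with $\sum n_i=n$), which amounts to recording the centre $\U_1(\C)$ of $\U_n(\C)$ as a distinguished factor of the centraliser/torus throughout, and tracking the harmless losses — polynomial in $K$ and of the shape $n^{O(n^2)}$ — incurred at each invocation of Lemmas \ref{basic-group-fact} and \ref{lift-lemma}. There is no real obstacle beyond Corollary \ref{corol} and Lemma \ref{centraliser-fact} themselves, which we are free to assume.
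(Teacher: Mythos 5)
Your overall strategy is the same as the paper's: pull $A$ down into $H$, pass to $S := A^2 \cap H$, project onto the $\U_n$ factor, apply Corollary~\ref{corol}, set $H'$ to be the (preimage of the) centraliser (or centre) times $H_0$, and use Lemmas~\ref{lift-lemma} and~\ref{basic-group-fact} to pull the centraliser bound back into a bound on $|A^2 \cap H'|$. That part is fine.

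However, the final step is wrong. You want to pass from $|A^2 \cap H'| \geq n^{-Cn^2}K^{-C}\delta|A|$ to $|A \cap H'x'| \geq n^{-Cn^2}K^{-C}\delta|A|$ for some coset. Your pigeonhole over the left cosets of $H'$ only yields
\[ \max_i f_i \;\geq\; \frac{|A^2 \cap H'|}{\sum_i f_i} \;=\; \frac{|A^2 \cap H'|}{|A|} \;\geq\; n^{-Cn^2}K^{-C}\delta, \]
which is an absolute constant, not a proportion of $|A|$. (To see how bad this is, take $A = H'$ a subgroup: your bound asserts $\max_i f_i \geq 1$, whereas the truth is $\max_i f_i = |A|$.) The issue is that the number of $H'$-cosets meeting $A$ could be enormous a priori, so a naive pigeonhole is useless. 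The correct argument is the one in the paper: since $A$ is a $K$-approximate group, $A^2 \subseteq XA$ with $|X| \leq K$, hence $A^2 \cap H' \subseteq \bigcup_{x\in X}(xA \cap H')$, and pigeonholing over the $K$ sets $xA \cap H'$ gives some $x$ with $|xA \cap H'| = |A \cap x^{-1}H'| \geq |A^2\cap H'|/K$; this is the desired lower bound because it is $\geq n^{-Cn^2}K^{-C-1}\delta|A|$. So you must pigeonhole over the $K$ translates covering $A^2$, not over the cosets of $H'$. Apart from this, your handling of the centre alternative (passing to a maximal torus, or reducing to the centraliser case via a non-scalar $\gamma_0 \in p(\widetilde A)$) is a slightly more elaborate route than the paper's simple choice $H' = Z \times H_0$, but is correct.
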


\emph{Proof of Lemma \ref{inductive-step}.} The hypothesis $|A \cap Hx| \geq \delta |A|$ immediately implies that $|A^2 \cap H| \geq \delta|H|$. By Lemma \ref{basic-group-fact} we see that $S := A^2 \cap H$ is a $K^3$-approximate group. By assumption we have
\[ H \cong \U_n(\C) \times H_0.\] The projection $\pi(S)$ onto the first factor $\U_n(\C)$ is another $K^3$-approximate group and we may apply Corollary \ref{corol} to it. If we are in the first case of that corollary, the lemma follows immediately with $H' = Z \times H_0$, where $Z\cong \U_1(\C)$ is the centre of $\U_n(\C)$.

If we are in the second case, then there is an element $\gamma$ in $S^2$ such that $|\pi(S)^2 \cap C_{\U_n(\C)}(\gamma)| \geq n^{-Cn^2} K^{-C}|\pi(S)|$.
 By Lemma \ref{centraliser-fact} this centraliser $C_{\U_n(\C)}(\gamma)$ is isomorphic to a subgroup of some product $\U_{n_1}(\C) \times \dots \times \U_{n_k}(\C)$ with $n_1 + \dots + n_k = n$ and $n_i < n$ for all $i$.  Write \[ H' := \U_{n_1}(\C) \times \dots \times \U_{n_k}(\C) \times \U_1(\C) \times H_0.\] By Lemma \ref{lift-lemma} we have $|S^6 \cap H'| \gg n^{-Cn^2} K^{-C}|S|$ and hence, by Lemma \ref{basic-group-fact}, that \[K^{11}|A^2 \cap H'|\geq |A^{12} \cap H'| \geq n^{-Cn^2} K^{-C}|S| \geq \delta  K^{-C}|A|.\] Since $A$ is a $K$-approximate group, $A^{2}$ is covered by $K$ translates $Ax$ of $A$. The result follows immediately.\endproof

\emph{Proof of Theorem \ref{mainthm}.} Simply apply Lemma \ref{inductive-step} repeatedly, starting with $H = \U_n(\C)$. After at most $n$ steps  we end up with some $x$ such that $|A \cap H'x| \geq n^{-Cn^3} K^{-Cn}|A|$, where $H'$ is isomorphic to a product of at most $2n$ copies of $\U_1(\C)$ and in particular is abelian. It follows that $|A^2 \cap H'| \geq n^{-Cn^3}K^{-Cn}|A|$, and hence by Lemma \ref{basic-group-fact} that $B := A^2 \cap H'$ satisfies the conclusions of Theorem \ref{mainthm}.
\endproof

\section{A more precise result}

Our aim in this section is to establish Corollary \ref{main-corollary}, a somewhat more precise structural conclusion about approximate subgroups of the unitary group. Let us begin by recalling the statement.

\begin{main-corollary-repeat}
Suppose that $A \subseteq \U_n(\C)$ is a $K$-approximate group. Then there is a torus $S \subseteq \U_n(\C)$ such that $A$ lies in the normaliser $N(S)$, and such that the image of $A$ under the quotient homomorphism $\pi : N(S) \rightarrow N(S)/S$ has cardinality at most $n^{Cn^4}K^{Cn^2}$.
\end{main-corollary-repeat}

Recall that by a \emph{torus} we mean a connected abelian subgroup. We will find it convenient to introduce the notion of \emph{root torus}: a root torus is by definition the intersection of conjugates of the full diagonal subgroup $T$ of $\U_n(\C)$. A root torus is \emph{a priori} a closed abelian subgroup of $\U_n(\C)$. It is in fact connected, as the following lemma shows.

\begin{lemma}\label{gtorus}
Every root torus in $\U_n(\C)$ is connected and hence is a torus. Moreover it is the intersection of at most $n$ conjugates of the full diagonal subgroup $T$.
\end{lemma}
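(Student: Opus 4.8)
The plan is to analyse a root torus $R = \bigcap_{i \in I} g_i T g_i^{-1}$ by passing to the level of Lie algebras, where "intersection of conjugates of $T$" becomes "intersection of conjugates of the Cartan subalgebra $\mathfrak{t}$", and intersections of subspaces are much easier to control. Write $\mathfrak{t}$ for the Lie algebra of $T$ (the diagonal skew-Hermitian matrices), so that the Lie algebra of $g_i T g_i^{-1}$ is $\operatorname{Ad}(g_i)\mathfrak{t}$. Since $T$ is a closed connected subgroup, and a conjugate of a connected subgroup is connected, each $g_i T g_i^{-1}$ is a closed connected abelian subgroup; the key point to establish is that the intersection of all of them is still connected. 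I would first reduce the (possibly infinite) intersection to a finite one: the poset of subspaces $\{\operatorname{Ad}(g_i)\mathfrak{t} : i \in I\}$ has the property that any descending chain of finite partial intersections stabilises after at most $\dim \mathfrak{t} = n$ steps (each proper inclusion drops the dimension by at least one, and we start from dimension $n$). Hence there is a subset $\{i_1,\dots,i_m\} \subseteq I$ with $m \le n$ such that $\bigcap_{j=1}^m \operatorname{Ad}(g_{i_j})\mathfrak{t} = \bigcap_{i \in I}\operatorname{Ad}(g_i)\mathfrak{t} =: \mathfrak{r}$, and I would like to conclude the analogous statement at the group level.

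The heart of the argument is that a root torus is \emph{determined by its Lie algebra} and equals the connected subgroup $\exp(\mathfrak{r})$ it generates. For a single conjugate $g_iTg_i^{-1}$ this is clear since $T = \exp(\mathfrak{t})$ (the diagonal torus is the image of its Lie algebra under $\exp$). For the intersection: any element $x \in R$ lies in every $g_iTg_i^{-1}$, so $x$ is simultaneously diagonalised by every $g_i$, and — crucially — since $x$ is unitary, $x = \exp(X)$ for a unique skew-Hermitian $X$ with eigenvalues in a fundamental domain, and $X$ is then simultaneously diagonalised by every $g_i$ as well (the eigenspaces of $X$ are exactly the eigenspaces of $x$, refined; more carefully, $x \in g_iTg_i^{-1}$ forces $\operatorname{Ad}(g_i^{-1})x$ diagonal, hence $\operatorname{Ad}(g_i^{-1})X = \log$ of a diagonal unitary is diagonal, so $X \in \operatorname{Ad}(g_i)\mathfrak{t}$). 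Therefore $X \in \mathfrak{r}$ and $x \in \exp(\mathfrak{r})$. Conversely $\exp(\mathfrak{r}) \subseteq \bigcap g_iTg_i^{-1} = R$ because $\mathfrak{r} \subseteq \operatorname{Ad}(g_i)\mathfrak{t}$ for each $i$ and $\exp$ is equivariant. Thus $R = \exp(\mathfrak{r})$, which is the continuous homomorphic image of the connected group $(\mathfrak{r},+)$, hence connected; being closed and abelian it is a torus. Moreover $R = \exp(\mathfrak{r}) = \exp\big(\bigcap_{j=1}^m \operatorname{Ad}(g_{i_j})\mathfrak{t}\big) = \bigcap_{j=1}^m g_{i_j} T g_{i_j}^{-1}$ by running the same equality backwards, so $R$ is an intersection of at most $n$ conjugates of $T$.

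The step I expect to be the main (albeit still routine) obstacle is the clean passage between the group-level intersection and the Lie-algebra-level intersection in the $\log$ direction: one must be careful that $x \in g_iTg_i^{-1}$ unitary genuinely implies $\log x \in \operatorname{Ad}(g_i)\mathfrak{t}$, i.e. that the branch of $\log$ one picks is simultaneously compatible with all the $g_i$. This is fine because $x$ unitary has a canonical skew-Hermitian logarithm $X$ (take eigenvalues $e^{i\theta}$ with $\theta \in (-\pi,\pi]$, say, noting $x$ has no $-1$ eigenvalue issues only cosmetically — one can perturb or just work on each eigenspace), and this $X$ is a polynomial in $x$ on the subspace where it is well-defined, hence commutes with $g_i^{-1}(\cdot)g_i$ whenever $x$ does; so $\operatorname{Ad}(g_i^{-1})X$ is the corresponding logarithm of the diagonal matrix $\operatorname{Ad}(g_i^{-1})x$, which is diagonal. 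Once this is pinned down, the dimension-counting reduction to $\le n$ conjugates and the connectivity of $\exp(\mathfrak{r})$ are immediate.
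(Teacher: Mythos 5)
Your proof is correct, but it takes a genuinely different route from the paper's. You descend to the Lie algebra, show that the root torus $R$ equals $\exp(\mathfrak{r})$ where $\mathfrak{r} = \bigcap_i \operatorname{Ad}(g_i)\mathfrak{t}$ (the delicate direction $R \subseteq \exp(\mathfrak{r})$ being handled by the polynomial-in-$x$ description of the principal skew-Hermitian logarithm, which makes $\log$ transform equivariantly under the various $g_i$), and then read off connectedness and the bound $m \leq n$ from dimension-counting on the lattice of subspaces $\operatorname{Ad}(g_i)\mathfrak{t}$. The paper instead stays entirely at the group level: after normalising so that $T$ itself is one of the conjugates, it observes that each $g_iTg_i^{-1}$ is the centraliser of a regular element $g_i\gamma g_i^{-1}$, and that a diagonal matrix $\operatorname{diag}(\lambda_1,\dots,\lambda_n)$ commutes with a fixed matrix $(a_{jk})$ precisely when $\lambda_j = \lambda_k$ for every $(j,k)$ with $a_{jk}\neq 0$. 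Thus the root torus is the subtorus of $T$ cut out by an equivalence relation on coordinates, visibly isomorphic to $\U_1(\C)^m$; connectedness is immediate and the $\leq n$ bound follows because a partition of $\{1,\dots,n\}$ is generated after at most $n-1$ mergings. The paper's argument is more elementary (no $\exp/\log$ machinery, no branch issues to address) and gives an explicit coordinate description; yours is more structural and would transfer with little change to a general compact connected Lie group with a fixed maximal torus. One small remark: the worry you flag about a $-1$ eigenvalue is harmless, since for a single fixed unitary $x$ the principal logarithm is a polynomial in $x$ by Lagrange interpolation regardless of whether $-1$ is an eigenvalue, and no continuity in $x$ is ever required; the parenthetical ``perturb'' is unnecessary. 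Also note that for your bound $m \leq n$ (rather than $n+1$) you are implicitly using that $\dim\mathfrak{r}\geq 1$, which holds because the centre of $\U_n(\C)$ lies in every conjugate of $T$ --- worth making explicit.
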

\newcommand\diag{\operatorname{diag}}
\begin{proof}
Let $T_i=g_iTg_i^{-1}$ be a collection of conjugates of the full diagonal subgroup $T$ (say with $T_1=T$). Pick an element $\gamma \in T$ with distinct eigenvalues. An element of $\U_n(\C)$ lies in $T$ (resp. $T_i$) iff it commutes with $\gamma$ (resp. $g_i\gamma g_i^{-1}$). On the other hand a diagonal matrix $\diag(\lambda_1,...,\lambda_n)$ commutes with a matrix $(a_{ij})$ if and only  $\lambda_i=\lambda_j$ whenever $a_{ij}\neq 0$. From these remarks it follows that the intersection $\bigcap_iT_i$ is the subset of $T$ defined by the equality of certain eigenvalues. It is thus isomorphic to a direct product of at most $n$ copies of the group of complex numbers of modulus one, and in particular it is connected. The second assertion of the lemma also follows immediately.
\end{proof}

\noindent \emph{Proof of Corollary \ref{main-corollary}.} By our main theorem, there is a conjugate $T$ of the full diagonal subgroup of $\U_n(\C)$ and a $K^{C}$-approximate group $B \subseteq T$ which $n^{Cn^3}K^{Cn}$-controls $A$. In particular, $|A^2 \cap T| \geq \delta|A|$, where $\delta :=n^{-Cn^3}K^{-Cn}$. Let $S:=\bigcap_{a \in \langle A \rangle}aTa^{-1}$. Clearly $S$ is a root torus and $A$ lies in $N(S)$. Moreover, since $g$-tori are connected and the dimension of $S$ is at most $n$, there must exist $a_i \in A^n$, $i=1,...,n$, $a_1=\id$, such that $S=\bigcap_{i=1}^n a_i T a_i^{-1}$.

Set $S_i=\bigcap_{j < i} a_i T a_i^{-1}$. We will establish by induction that $|A^2 \cap S_i| \geq \delta_i |A|$, where $\delta_i=(\delta K^{-2n-6})^{i-1}$. This statement in the case $i=n+1$ implies that $|A^2 \cap S| \geq \delta_{n+1} |A|$. This establishes the corollary since $|\pi(A)\Vert A^2 \cap S| \leq |A^3|\leq K^2 |A|$, and so $|\pi(A)| \leq \delta_{n+1}^{-1}K^2 \leq n^{Cn^4} K^{Cn^2}$.

The base of the induction $i=2$ has already been checked, so we will focus on the induction step, assuming $i\geq 2$.
Set $B_1=A^2 \cap S_i$ and $B_2=a_i (A^2 \cap T) a_i^{-1} \subseteq A^{2n+2}$. We have $|B_2|=|A^2 \cap T| \geq \delta |A|$. On the other hand $B_1B_2 \subseteq A^{2n+4}$ and $|B_1| \geq \delta_i |A|$ by the induction hypothesis. It follows that if $F$ is the largest fibre of the map $\phi : B_1 \times B_2 \rightarrow B_1B_2$ defined by $\phi(b_1,b_2) =b_1b_2$, then
\[ |B_1\Vert B_2| \leq |F\Vert B_1B_2| \leq |F\Vert |A^{2n + 4}| \leq K^{2n + 3}|A|\] and therefore \[ |F|\geq \delta \delta_i K^{-2n - 3}|A|.\]  Since $F$ is a fibre of $\phi$,  there is $x \in B_1B_2$ with at least $|F|$ representations as $b_1b_2$ with $b_1 \in B_1$ and $b_2 \in B_2$. Fix one of these representations and let $x = b'_1 b'_2$ be any other. Then we clearly have
\[ b_1^{\prime -1} b_1 = b'_2 b_2^{-1},\] and so $b_1^{\prime -1} b_1 \in B_2^2$. Since different values of $b'_1$ give different values of $b_1^{\prime - 1}b_1$, it follows that $|B_1^2 \cap B_2^2|\geq |F|$.
Note, however, that
\[ B_1^2 = (A^2 \cap S_i)^2 \subseteq A^4 \cap S_i,\] whilst
\[ B_2^2 = a_i (A^2 \cap T)^2 a_i^{-1} \subseteq a_i T a_i^{-1},\]
whence
\[ B_1^2 \cap B_2^2 \subseteq A^4 \cap S_i \cap a_i T a_i^{-1} = A^4 \cap S_{i+1}.\]
Therefore $|A^4 \cap S_{i+1}| \geq |F|$. Since $A$ is a $K$-approximate group, $A^{4}$ is covered by $K^{3}$-translates of $A$. In particular there is some $x$ such that $|A \cap S_{i+1}x| \geq K^{-3}|F|$, and this immediately implies that $|A^2 \cap S_{i+1}| \geq K^{-3}|F| \geq \delta_{i+1}|A|$, the desire to have this last inequality hold being the reason for our particular choice of $\delta_{i+1}$. This ends the proof of the induction step and hence the proof of the corollary.
\endproof

\emph{Remarks.} Suppose that $A \subseteq \U_n(\C)$ is a symmetric set satisfying the \emph{small tripling} condition $|A^3| \leq K|A|$. Then the conclusion of Corollary \ref{main-corollary} still holds, since then $A^3$ is a $K^{C}$-approximate group containing $A$. This follows from standard multiplicative combinatorics (see, for example, Proposition 3.1 in \cite{bg1}). As a consequence we obtain the following corollary.



\begin{corollary} \label{corollary2} Suppose that $A \subseteq U_n(\C)$ is a symmetric subset with $|A^3| \leq K|A|$ and that the closure of the subgroup $\langle A \rangle$ is a connected subgroup of $\U_n(\C)$ with no connected abelian normal subgroup. Then $|A| \leq n^{Cn^4} K^{Cn^2}$.\end{corollary}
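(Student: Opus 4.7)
The plan is to derive this from Corollary \ref{main-corollary} via a fibre-counting argument, using the normality hypothesis on $\overline{\langle A \rangle}$ to control the kernel of the quotient map.

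First, by the remark immediately preceding this statement, $A^3$ is a $K^C$-approximate subgroup of $U_n(\C)$ containing $A$, so Corollary \ref{main-corollary} applies to $A^3$ and produces a torus $S \subseteq U_n(\C)$ with $A \subseteq A^3 \subseteq N(S)$ and
\[ |\pi(A)| \leq |\pi(A^3)| \leq n^{Cn^4} K^{Cn^2}, \]
where $\pi : N(S) \to N(S)/S$ is the quotient map. The remaining task is to bound the fibres of $\pi|_A$ by a function of $n$ alone.

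Set $G := \overline{\langle A \rangle} \subseteq N(S)$. The Weyl-type quotient $N(S)/C(S)$, with $C(S)$ the centraliser of $S$ in $U_n(\C)$, injects into $\operatorname{Aut}(S) \cong GL_d(\Z)$ (with $d = \dim S$) and is therefore discrete. Since $G$ is connected, the induced map $G \to N(S)/C(S)$ is trivial, so $G$ centralises $S$. Hence $G \cap S$ is central in $G$, its identity component $(G \cap S)^\circ$ is a connected abelian normal subgroup of $G$, and so by hypothesis $(G \cap S)^\circ = \{e\}$; in particular $G \cap S$ is \emph{finite}. For any $a, a' \in A$ with $\pi(a) = \pi(a')$ one has $a(a')^{-1} \in S \cap AA^{-1} = S \cap A^2 \subseteq G \cap S$, so every fibre of $\pi|_A$ has at most $|G \cap S|$ elements and
\[ |A| \leq |\pi(A)| \cdot |G \cap S|. \]

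Finally, $G$ is a connected compact semisimple subgroup of $U_n(\C)$ — semisimple precisely because of the no-connected-abelian-normal-subgroup hypothesis — so by the classification of compact Lie groups $|G \cap S| \leq |Z(G)|$ is bounded by a function of $n$ alone; concretely, the simply-connected cover $\widetilde G$ is a product of compact simply-connected simple factors whose ranks sum to at most $n = \operatorname{rank}\, U_n(\C)$, and an elementary case check gives $|Z(\widetilde G)| \leq 2^n$. This is easily absorbed into the $n^{Cn^4}$ factor, and the stated bound follows.

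The main obstacle is the clean handling of the finite subgroup $G \cap S$: the hypothesis on $G$ must be used twice — once to make $G$ semisimple (ensuring $Z(G)$ is finite) and once to force $(G \cap S)^\circ = \{e\}$ — and one then needs a quantitative Lie-theoretic bound on $|Z(G)|$ to close the inequality. The passage from small tripling to an approximate group and the fibre count itself are routine.
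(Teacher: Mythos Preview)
Your proof is correct and follows essentially the same approach as the paper: apply Corollary~\ref{main-corollary} to the $K^C$-approximate group $A^3$, bound $|\pi(A)|$, and then bound the fibres by $|G\cap S|$, which is finite central in the semisimple group $G$. The only cosmetic differences are that you reach centrality of $G\cap S$ via the inclusion $G\subseteq C(S)$ (from connectedness of $G$ and discreteness of $N(S)/C(S)$) rather than via ``finite normal in connected $\Rightarrow$ central'', and you bound $|Z(G)|$ by a rank argument ($\leq 2^n$) whereas the paper uses the dimension bound $|Z(G)|\leq \dim G\leq n^2$.
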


\begin{proof} The set $A^{3}$ is a $K^C$-approximate group, and so by Corollary \ref{main-corollary}  it must be contained in $N(S)$, where $S$ is a connected abelian subgroup of $\U_n(\C)$. By our assumption, $G:=\overline{\langle A \rangle}$ is a connected semisimple compact group with dimension $\leq n^2$. It is well-known (e.g. see \cite{tomdieck}) that the centre of a connected semisimple compact Lie group of dimension $d$ is finite and in fact of size at most $d$. Since $S \cap G$ is a finite normal subgroup of $G$, it is central (this follows by connectedness of $G$, since the map $G \rightarrow G, g \mapsto gxg^{-1}$ is continuous and takes only finitely many values if $x$ belongs to a finite normal subgroup) and thus of size at most $n^2$. By Corollary \ref{main-corollary}, $|A^2 \cap S| \geq n^{-Cn^4}K^{-Cn^2}|A|$, and so the result follows immediately.\end{proof}

If $A$ is only assumed to have small doubling, i.e. $|A^2| \leq K|A|$, then it follows from the non-commutative Balog-Szemer\'edi-Gowers lemma (see \cite{tao-noncommutative}) that $A$ is $K^C$-controlled by a $K^C$-approximate subgroup. In particular, applying Theorem \ref{mainthm}, we conclude that $A$ is contained in $n^{Cn^3}K^{Cn}$ cosets of a connected abelian subgroup of $\U_n(\C)$.

\section{On Gromov's theorem}

\label{gromov-sec}

In this section we show how our main result gives a new elementary proof of the fact that non-virtually abelian subgroups of $\U_n(\C)$ cannot have polynomial growth, and in fact have growth at least $\exp(r^{\alpha})$.

Recall that a group $G$ has \emph{polynomial growth} with exponent $d$ if there is a finite symmetric set $\Sigma$ of generators such that one has the bound
\begin{equation}\label{poly-growth} |\Sigma^r| \leq Br^d\end{equation} for all $r \geq 1$, where $B = B_{\Sigma}$ does not depend on $r$. If one set $\Sigma$ of generators has this property then it is easy to see that any other set $\Sigma'$ does too, although $B_{\Sigma'}$ may be different. Thus polynomial growth is a well-defined property of the \emph{group}.

\begin{proposition}\label{prop5}
Suppose that $G \subseteq \U_n(\C)$ is a finitely generated group with polynomial growth. Then $G$ is virtually abelian.
\end{proposition}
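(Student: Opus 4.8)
The plan is to deduce Proposition \ref{prop5} from Theorem \ref{mainthm} (or its corollaries) by a standard Solovay-Kitaev-free pivoting argument, turning polynomial growth into a small-tripling statement at some scale and then iterating to control the group. First I would fix a finite symmetric generating set $\Sigma$ (with $\id \in \Sigma$) and use the polynomial growth bound $|\Sigma^r| \leq Br^d$. A pigeonhole argument shows that there is a scale $r$, which we may take as large as we like, for which $|\Sigma^{3r}| \leq K |\Sigma^r|$ with $K$ a fixed power of $d$ (say $K \leq (3^{d+1}B)^{C}$, and in any case bounded independently of $r$): otherwise $|\Sigma^{3^j}|$ would grow faster than any polynomial. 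Set $A := \Sigma^r$. Then $A$ is symmetric, contains the identity, and satisfies $|A^3| \leq K|A|$, so by the remark following Corollary \ref{main-corollary} (small tripling gives a $K^C$-approximate group $A^3 \supseteq A$, and then Theorem \ref{mainthm} applies) the set $A$ is contained in $n^{Cn^3}K^{Cn}$ cosets of a connected abelian subgroup $S \subseteq \U_n(\C)$; equivalently, $A$ lies in $N(S)$ with $|\pi(A)|$ bounded, where $\pi : N(S) \to N(S)/S$.

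Next I would upgrade this single-scale conclusion to a statement about all of $G$. The key point is that $G = \langle \Sigma \rangle = \langle A \rangle$ since $A = \Sigma^r \supseteq \Sigma$. One would like to conclude that $G$ itself lies in $N(S)$, i.e. that $S$ is normalised by all of $G$, not just by $A$; this holds because $A$ generates $G$ and $N(S)$ is a subgroup containing $A$. Then $\pi : G = \langle A \rangle \to N(S)/S$ has image $\langle \pi(A) \rangle$, which is a finitely generated subgroup of $N(S)/S$. To bound this image I would invoke Jordan's theorem together with growth: $N(S)/S$ embeds into (a quotient of) a product of smaller unitary groups $\U_{n_i}(\C)$ (the Weyl-type quotient of a normaliser of a torus is a finite-by-$(\text{product of } \U_{n_i})$ thing — here one uses the structure of $N(S)$ for $S$ a connected abelian subgroup of a compact group), and $\pi(G)$ is a finitely generated subgroup of polynomial growth there, so by induction on $n$ it is virtually abelian. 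Combining: $\ker(\pi|_G) = G \cap S$ is abelian (being a subgroup of the abelian group $S$), and $\pi(G)$ is virtually abelian, hence $G$ is virtually-(abelian-by-abelian); a short argument (a finitely generated polynomial-growth group that is abelian-by-(virtually abelian) is virtually abelian, using that extensions controlled here are "polycyclic-like") then yields that $G$ is virtually abelian. Alternatively, and more cleanly, one runs the induction directly: the base case $n = 1$ is trivial ($\U_1(\C)$ is abelian), and the inductive step is exactly the paragraph above, with $\pi(G) \subseteq \prod_i \U_{n_i}(\C)$, each $n_i < n$, finitely generated of polynomial growth, hence virtually abelian by induction, and $G \cap S$ abelian.

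The main obstacle I anticipate is the passage from "$A$ is covered by few cosets of $S$" to "$G$ normalises $S$ and $\pi(G)$ is finite-dimensionally small". Having $A$ in finitely many cosets of $S$ does \emph{not} by itself force $\langle A \rangle$ into $N(S)$ unless $A \subseteq N(S)$, which is precisely the form of Corollary \ref{main-corollary} (it gives $A \subseteq N(S)$, not merely $A$ in few cosets of $S$), so I would be careful to use the corollary in its normaliser form rather than the cruder coset form. The second delicate point is controlling the growth of $\pi(G)$: one must check that $\pi$ does not destroy polynomial growth, which is immediate since homomorphic images of polynomial-growth groups have polynomial growth (of no larger degree), and that $N(S)/S$ is itself (virtually) a product of lower-rank unitary groups so the induction hypothesis applies — this requires the elementary structure theory of $N(S)$ for a torus $S$ in a compact group (the identity component of $N(S)$ is $Z_G(S)$ up to the Weyl group, and $Z_{\U_n}(S)$ is a product of smaller unitary groups), which is exactly the ingredient already used via Lemma \ref{centraliser-fact}. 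Modulo that bookkeeping, the argument is a routine descent, and the author's proof (which I expect follows in the next paragraph) presumably makes these steps precise, possibly also quantifying the super-polynomial lower bound $\exp(r^\alpha)$ by tracking the dependence of $K$ on $r$ in the pigeonhole step.
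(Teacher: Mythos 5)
Your proposal takes a genuinely different route from the paper, and it has a gap at the final step.

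The paper's proof is far more direct and avoids induction on $n$ entirely: from polynomial growth one extracts a good scale $r$ with $|\Sigma^{7r}| \leq 8^d|\Sigma^r|$, so $A := \Sigma^{3r}$ is a $K$-approximate group with $K = O(1)^d$. Theorem \ref{mainthm} then produces a coset $Hx$ of an abelian subgroup $H$ with $|A \cap Hx| \geq c_{n,d}|A|$, whence $|\Sigma^{6r}\cap H| \geq c_{n,d}|\Sigma^r|$. After replacing $H$ by $\langle A^2 \cap H\rangle \leq G$, one argues that if $[G:H] = \infty$ then $\Sigma^k$ meets at least $k$ right cosets of $H$, so $|\Sigma^{6r+k}| \geq k c_{n,d}|\Sigma^r|$; choosing $k$ large and a good $r > k$ contradicts the good-scale bound. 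There is no induction, no appeal to Corollary \ref{main-corollary}, and no structure theory of $N(S)/S$.

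The gap in your argument is the last step, where you assert that ``a finitely generated polynomial-growth group that is abelian-by-(virtually abelian) is virtually abelian, using that extensions controlled here are `polycyclic-like'.'' This is false as a general group-theoretic statement: the integral Heisenberg group $H_3(\Z)$ is polycyclic, has polynomial growth, and is (central $\Z$)-by-$\Z^2$, yet it is not virtually abelian. Nothing in your invocation of ``polynomial growth'' or ``polycyclic-like'' rules this out, so the deduction as written does not go through. The fact you actually need is specific to the ambient group: a virtually solvable subgroup of the compact Lie group $\U_n(\C)$ is virtually abelian, because the closure is a compact Lie group whose identity component has solvable, hence abelian, Lie algebra and is therefore a torus. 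That repair works, but it is a different (and unmentioned) ingredient, and it makes the induction on $n$ superfluous --- once you know the compactness argument you could apply it directly rather than via your inductive reduction to $\prod_i \U_{n_i}(\C)$. The paper sidesteps all of this by never passing to a quotient: it builds an abelian $H \leq G$ and bounds $[G:H]$ purely by counting.

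Two smaller points. First, Corollary \ref{main-corollary} appears in the paper \emph{after} Proposition \ref{prop5} in logical dependency terms (it is stated before, but the paper's proof of Proposition \ref{prop5} deliberately uses only Theorem \ref{mainthm}); your reliance on the full normaliser form of Corollary \ref{main-corollary} is heavier machinery than is needed. Second, $N(S)/S$ is not literally a product of smaller unitary groups but a finite extension of a central quotient of one, so the inductive step needs a little more care than ``$\pi(G) \subseteq \prod_i \U_{n_i}(\C)$''; this is a bookkeeping issue, not a fatal one, but it again illustrates that the coset-counting argument is cleaner.
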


\begin{proof}
Let $S$ be a generating set. There are clearly arbitrarily large $r$ for which
\begin{equation}\label{good-r-bound} |\Sigma^{7r}| \leq 8^d |\Sigma^r|,\end{equation} since if not the polynomial growth hypothesis would be violated. Call these values good, and suppose in what follows that $r$ is good.
By standard multiplicative combinatorics (see in particular Proposition 3.1 (v) in Part I of this series) it follows that $A := \Sigma^{3r}$ is a $K$-approximate group for some $K = O(1)^d$. By Theorem \ref{mainthm}, there is some abelian group $H \leq \U_n(\C)$ and a coset $Hx$ such that $|A \cap Hx| \geq c_{n,d}|A|$, where $c_{n,d} > 0$ depends only on $n$ and $d$. We therefore have
\begin{equation}\label{s6-bd} |\Sigma^{6r} \cap H| = |A^2 \cap H| \geq c_{n,d}|A| \geq c_{n,d} |\Sigma^r|.\end{equation}
Replacing $H$ by the subgroup generated by $A^2 \cap H$ (if necessary) we may assume without loss of generality that $H \leq G$. Assume that $[G : H] = \infty$. Then, since $\Sigma$ generates $G$, it is easy to see that $\Sigma^k$ meets at least $k$ different right cosets of $H$, for every integer $k \geq 1$. It follows from this observation and \eqref{s6-bd} that
\[ |\Sigma^{6r + k}| \geq k c_{n,d} |\Sigma^r|.\]
Choosing $k > 8^d/c_{n,d}$ and some good value of $r$ with $r > k$, we obtain a contradiction to \eqref{good-r-bound}. Thus we were wrong to assume that $[G : H] = \infty$, and this concludes the proof.
\end{proof}

One could run the above argument more carefully to get an explicit upper bound on $[G : H]$. However this observation is redundant here since it is known by rather easier arguments that any virtually abelian group $G \leq \U_n(\C)$ has an abelian subgroup $H$ with $[G : H] \leq F(n)$, where $F(n)=O(n!(n+1)!)$.  We offer a brief sketch proof of this fact in Appendix \ref{virtually-abelian-app}.

\vspace{.3cm}

Using Corollary \ref{corollary2}, one can also prove the following quantitative form of the above proposition.

\begin{proposition}\label{prop6}
Let $\Sigma$ be a finite symmetric subset of $\U_n(\C)$ and that $\langle \Sigma \rangle$ is not virtually abelian. Then $|\Sigma^r| \geq 2^{c r^{\alpha}}$ for all $r \geq 1$, where $\alpha>0$ depends only on $n$ and $c=c_\Sigma >0$.
\end{proposition}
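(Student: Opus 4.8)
The plan is to mimic the proof of Proposition \ref{prop5} but extract a quantitative contradiction instead of a qualitative one, using Corollary \ref{corollary2} in place of Theorem \ref{mainthm}. First I would reduce to the case where $G := \langle \Sigma \rangle$ has Zariski/closure-theoretic structure amenable to Corollary \ref{corollary2}. By passing to the connected component of the closure $\overline{G}$, and quotienting by its solvable radical, one cannot directly keep $\Sigma$; instead the cleaner route is contrapositive: \emph{if} $|\Sigma^r| < 2^{cr^\alpha}$ for some small $\alpha$ and infinitely many $r$, derive that $G$ is virtually abelian. So suppose for contradiction that for every $\alpha > 0$ there are arbitrarily large $r$ with $|\Sigma^r| < 2^{r^\alpha}$ (a standard argument shows that failure of the conclusion for all $c$ forces this for a single suitable small $\alpha = \alpha(n)$, after a dyadic pigeonholing on the multiplicative energy of the word balls).

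Next I would locate a scale at which the word ball behaves like an approximate group. Fixing such an $r$ and looking at the values $|\Sigma^{3^j}|$ for $j$ up to roughly $\log r$, a pigeonhole argument (exactly as in Proposition \ref{prop5}, via Proposition 3.1(v) of Part I) produces a scale $s$ with $s \le r$, $s$ not too small — say $s \ge r^{1/2}$ after choosing $\alpha$ small enough — such that $|\Sigma^{7s}| \le K |\Sigma^s|$ with $K = 2^{O(s^\alpha)}$ or even $K = 2^{O(r^\alpha)}$ — in any case $K$ subexponential in $s$. Then $A := \Sigma^{3s}$ is a $K^C$-approximate group, and in particular $A^3 = \Sigma^{9s}$ satisfies $|A^3| \le K^C |A|$ with the same subexponential $K$.

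Now I would apply Corollary \ref{corollary2}. Its hypothesis requires $\overline{\langle A \rangle} = \overline{G}$ to be connected with no connected abelian normal subgroup; this is where I would do the preliminary reduction, replacing $G$ by a finite-index subgroup (to pass to the identity component of $\overline{G}$) and then by the image under the projection killing the solvable radical of $\overline{G}^{\circ}$ — each such operation changes $\Sigma$ only to a comparable generating set and changes growth rates only by bounded factors, and \emph{virtual} abelianness of the original $G$ is equivalent to that of the reduced group being trivial, i.e. to $\overline{G}^\circ$ being a torus. If $G$ is not virtually abelian, the reduced group $\overline{G}^\circ / \mathrm{Rad}$ is a nontrivial connected semisimple compact group, and the image of $A$ there still has small tripling and still generates a dense subgroup. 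Corollary \ref{corollary2} then forces $|A| \le n^{Cn^4} K^{Cn^2}$, i.e. $|\Sigma^{3s}| \le 2^{O(s^\alpha) \cdot O(n^2)} \cdot n^{Cn^4}$. But $\Sigma^{3s}$ contains at least $3s + 1$ elements (in fact, since the image group is infinite — it is a dense, hence infinite, subgroup of a positive-dimensional compact group — the ball of radius $3s$ in this infinite finitely generated group has size at least $3s+1$, even linear growth giving $|\Sigma^{3s}| \ge 3s$). Comparing $3s \le |\Sigma^{3s}| \le 2^{O(n^2 s^\alpha)} n^{Cn^4}$ with $s \ge r^{1/2}$ and $r$ arbitrarily large yields a contradiction as soon as $\alpha < 1$ (and $r$ large relative to $n$). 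Hence $G$ is virtually abelian, contradicting the hypothesis; unwinding, we get $|\Sigma^r| \ge 2^{cr^\alpha}$ for all $r$, with the final "for all $r \ge 1$" obtained from the infinitely-many-good-$r$ version by the usual monotonicity-and-constant-adjustment trick (shrinking $c$).

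The main obstacle, I expect, is not the combinatorics but the algebraic-group reduction in the previous paragraph: one must carefully check that passing to the identity component of the closure and then to the semisimple quotient can be done while (a) only passing to a finite-index subgroup of $G$, (b) controlling how the generating set and its growth function transform, and (c) ensuring the resulting group is genuinely infinite so that the lower bound $|\Sigma^{3s}| \ge 3s$ is available — equivalently, ruling out the degenerate case where the semisimple quotient is trivial, which is exactly the assumption that $G$ is \emph{not} virtually abelian. Everything else (the pigeonholing to find a good scale $s \ge r^{1/2}$, the bookkeeping of exponents of $K$, and the final contradiction) is routine and parallels Proposition \ref{prop5} and the remarks following Corollary \ref{corollary2}.
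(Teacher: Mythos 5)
Your overall strategy — pass to the identity component of the closure, quotient by an abelian normal subgroup so that Corollary \ref{corollary2} applies, and use the resulting tripling bound — is the right one, and your instinct that the algebraic-group reduction is the place requiring care is sound. The paper carries it out by quotienting the connected compact group $\overline{\langle \Sigma \rangle}^0$ by its \emph{centre} $Z$ (rather than the solvable radical, though for a connected compact group these differ only by a finite group): the quotient is then a centre-free connected semisimple compact group, a normal torus in which would be central and hence trivial, so the hypothesis of Corollary \ref{corollary2} holds. Passing to a finite-index subgroup and to a quotient alters $\Sigma$ and the growth function only up to factors absorbed into $c_\Sigma$, as you anticipate.

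However, the final step of your argument has a genuine gap. You derive the two-sided bounds $3s \leq |\Sigma^{3s}| \leq n^{Cn^4}\,2^{O(n^2 s^\alpha)}$ and declare a contradiction ``as soon as $\alpha < 1$''. This is not a contradiction: for any fixed $\alpha > 0$ the right-hand side grows superpolynomially in $s$ and in particular dominates $3s$ for all large $s$. The crude lower bound $|\Sigma^{3s}| \geq 3s$ coming from infiniteness of the group is far too weak to be incompatible with an upper bound that is exponential in a positive power of $s$. (Your subsidiary claim that the pigeonholed scale can be taken $s \geq r^{1/2}$ after shrinking $\alpha$ is also not justified by the dyadic pigeonhole you describe — the scale at which the ratio drops could be very small — but this is secondary to the main problem.) The root cause is that you calibrate $K$ to the radius $r$ rather than to the size $|A|$, and Corollary \ref{corollary2} only gives useful information when $K$ is small compared with an appropriate power of $|A|$.

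The fix, which is what the paper does, avoids the contradiction/pigeonhole framework entirely. Apply the contrapositive of Corollary \ref{corollary2} to $A = \pi(\Sigma^k)$ directly at every scale $k$, taking $K := |\pi(\Sigma^k)|^{\eps}$ with $\eps = \eps(n)$ chosen so small (say $\eps < 1/(2Cn^2)$) that $|\pi(\Sigma^k)| > n^{Cn^4} K^{Cn^2} = n^{Cn^4} |\pi(\Sigma^k)|^{Cn^2\eps}$ holds once $|\pi(\Sigma^k)|$ exceeds a threshold depending only on $n$ — which happens for $k \geq k_0(\Sigma)$, since $\pi(\langle\Sigma\rangle)$ is dense in a positive-dimensional compact group and hence infinite. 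The contrapositive then yields $|\pi(\Sigma^{3k})| > K|\pi(\Sigma^k)| = |\pi(\Sigma^k)|^{1+\eps}$. Iterating over $k = k_0, 3k_0, 9k_0, \dots$ gives $|\pi(\Sigma^{3^m k_0})| \geq |\pi(\Sigma^{k_0})|^{(1+\eps)^m}$, and hence $|\Sigma^r| \geq |\pi(\Sigma^r)| \geq 2^{c r^{\alpha}}$ with $\alpha = \log_3(1+\eps)$ depending only on $n$ and $c = c_\Sigma$ absorbing $k_0$ and the bounded changes made during the initial reduction.
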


\begin{proof} Let $G$ be the closure of the subgroup $\langle \Sigma \rangle$ generated by $\Sigma$, let $G^0$ its connected component of the identity, and $i:=[G:G^0]$.  There is no loss of generality in passing to subgroup $\langle \Sigma \rangle \cap G^0$. Indeed $\Sigma^{2i-1}$ contains a generating set for $\langle \Sigma \rangle \cap G^0$ (see e.g. \cite[Lemma C.1]{bgt-paper}) and we may replace $\Sigma$ by this subset. As a result, we may assume that $G$ is connected. Let $Z$ be its centre and write $\pi:G \rightarrow G/Z$ for the quotient. Then, for every $r\geq 1$, $\pi(\Sigma^r)$ generates a dense subgroup of the non-trivial connected centre-free compact Lie group $G/Z$. The contrapositive of Corollary \ref{corollary2} therefore applies and we obtain an $\eps = \eps(n) > 0$ for which $|\pi(\Sigma^{3k})| \geq |\pi(\Sigma^k)|^{1+\eps}$ for every $k \geq 1$. Iterating this clearly leads to a bound of the form $|\pi(\Sigma^r)| \geq 2^{cr^{\alpha}}$, which certainly implies the proposition.\end{proof}

Of course, much stronger results in this context are known. In fact from the Tits alternative \cite{tits}, the theorem of Milnor \cite{milnor} and Wolf \cite{wolf}, and the fact that every nilpotent subgroup of $\U_n(\C)$ is virtually abelian it follows that a finitely-generated subgroup $G \leq \U_n(\C)$ which is not virtually abelian has \emph{exponential} growth. Moreover, in view of the uniform Tits alternative \cite{breuillard-tits}, the exponential growth rate is even independent of $\Sigma$.

We remark that non polynomial growth for certain subgroups of $\GL_n(\C)$ was used as a key ingredient by Gromov in his original work \cite{gromov} and also, subsequently, by Kleiner \cite{kleiner}, who needed this fact only for subgroups of $\U_n(\C)$. Our arguments here may be inserted into Kleiner's work, thereby avoiding any appeal to the Tits alternative. It should be noted that Shalom and Tao \cite{shalom-tao} also avoid the Tits alternative in the relevant step of their variant of Kleiner's proof, appealing instead to the Solovay-Kitaev argument.

\appendix

\section{Simple facts from metric geometry}\label{bes-app}

We need some facts concerning covering by balls in certain metric spaces. If $(X,d)$ is a metric space then we write $B(x,r) = \{y \in X : d(x,y) < r\}$ for the \emph{open} ball of radius $r$ centred on $x$ and $\overline{B}(x,r) := \{ y \in X : d(x,y) \leq r\}$ for the corresponding closed ball.

\begin{definition}\label{besi-def}
Let $(X,d)$ be a metric space. We say that $X$ has the weak Besicovitch property with constant $k$ if the following is true. If $x_1,\dots,x_k \in X$ and if $r_1,\dots,r_k \in \R_{\geq 0}$ are such that the closed balls $\overline{B}(x_i,r_i)$ have nonempty intersection then there are distinct indices $i$ and $j$ such that $x_i$ lies in the open ball $B(x_j,r_j)$
\end{definition}

We call this the \emph{weak} Besicovitch property since it follows easily from the usual Besicovitch covering property as detailed, for example, in Theorem 1.1 of \cite{deG}. It seems to be somewhat weaker and easier to prove than that property, however. We shall write $k_{\bes}(X)$ for the smallest constant $k$ which works in the above definition.\vspace{11pt}

\emph{Example.} We have $k_{\bes}(\R^2) = 8$, where $\R^2$ is endowed with the Euclidean metric and identified with the complex plane. To see that $k_{\bes}(\R^2) \leq 8$, suppose that $x_1,\dots,x_8 \in \R^2$ and that $r_1,\dots,r_8 \in \R_{\geq 0}$. Let $z$ lie in the intersection of all eight of the closed balls $\overline{B}(x_i,r_i)$. Perhaps one of the $x_i$ coincides with $z$; if so, suppose it is $x_8$. By the pigeonhole principle there is some choice of $i,j$, $1 \leq i < j \leq 7$, such that the angle $\angle x_i z x_j$ is less than $\pi/3$; this means that $|x_i - x_j|$ is less than either $r_i \geq |x_i - z|$ or $r_j \geq |x_j - z|$, and hence that either $x_i \in B(x_j,r_j)$ or $x_j \in B(x_i, r_i)$. On the other hand it is clear by considering $x_j = e^{2\pi i j/6}$, $j = 1,2,\dots,6$, $x_7 = 0$ and $r_j = 1$ that $k_{\bes}(\R^2)$ is not \emph{less} than 8.\vspace{11pt}

It is not particularly difficult to adapt the preceding argument to establish the following.

\begin{lemma}\label{balls-besi} Suppose that $\R^n$ is endowed with the Euclidean metric. Then $k_{\bes}(\R^n) \leq 3^n+1$.
\end{lemma}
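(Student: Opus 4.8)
The plan is to mimic the argument just given for $k_{\bes}(\R^2) \leq 8$, replacing the planar pigeonhole-on-angles step by a covering argument for the unit sphere $S^{n-1}$. First I would set up the same framework: suppose $x_1, \dots, x_k \in \R^n$ and $r_1, \dots, r_k \geq 0$ are such that all the closed balls $\overline{B}(x_i, r_i)$ contain a common point $z$. After discarding at most one index $i$ for which $x_i = z$ (and handling that degenerate case trivially, since then $x_i \in \overline{B}(x_j,r_j)$ forces $x_i \in B(x_j,r_j)$ unless $r_j = 0$ and $x_j = z = x_i$, a case one disposes of separately), I may assume $x_i \neq z$ for all remaining indices. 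For each such $i$, let $u_i := (x_i - z)/|x_i - z| \in S^{n-1}$ be the unit vector pointing from $z$ towards $x_i$.

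The key geometric observation is the elementary fact that if the angle $\angle x_i z x_j$ at $z$ is strictly less than $\pi/3$, then $|x_i - x_j| < \max(|x_i - z|, |x_j - z|)$, and hence (since $|x_i-z| \leq r_i$ and $|x_j - z| \leq r_j$) one of $x_i \in B(x_j,r_j)$ or $x_j \in B(x_i,r_i)$ holds; this is just the observation that in a triangle the side opposite the strictly smallest angle is strictly shortest, with the angle at $z$ being $< \pi/3$ forcing it to be the unique smallest. So it suffices to show that among any $3^n + 1$ unit vectors $u_i \in S^{n-1}$, two of them subtend an angle strictly less than $\pi/3$ at the origin, i.e. satisfy $\langle u_i, u_j \rangle > 1/2$. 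The standard way to see this is a packing/covering argument: the open caps $\{v \in S^{n-1} : \langle v, u_i \rangle > \cos(\pi/6)\}$ of angular radius $\pi/6$ about the $u_i$ are pairwise disjoint if no two $u_i$ are within angle $\pi/3$, and one bounds how many disjoint caps of angular radius $\pi/6$ fit on $S^{n-1}$. A clean way to get the bound $3^n$ is to pass to the ambient $\R^n$: the open balls $B(u_i, 1/2)$ of radius $1/2$ about the points $u_i$ (which all lie on the unit sphere) are pairwise disjoint — because $\langle u_i, u_j\rangle \leq 1/2$ gives $|u_i - u_j|^2 = 2 - 2\langle u_i, u_j \rangle \geq 1$ — yet all are contained in the ball $B(0, 3/2)$, so a volume comparison yields $k - 1 \leq (3/2)^n / (1/2)^n = 3^n$, i.e. $k \leq 3^n + 1$; since we assumed $k = 3^n+1$ remaining indices after possibly discarding one, we would need to start from $k = 3^n + 2$ — so I would instead phrase it as: if $k \geq 3^n + 2$ then after discarding the at-most-one coincident index we have $\geq 3^n + 1$ vectors $u_i$, forcing a collision — wait, that gives $k_{\bes} \leq 3^n + 2$. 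Let me note that the precise bookkeeping ($3^n+1$ vs $3^n+2$) is exactly the point the ``not particularly difficult to adapt'' remark is glossing, and the cleanest route is probably to run the volume bound on $S^{n-1}$ directly (disjoint spherical caps of angular radius $\pi/6$ inside $S^{n-1}$, whose measures sum to at most $\omega_{n-1}$) so as to land on $3^n$ rather than $3^n+1$ disjoint regions; alternatively one observes that the common point $z$ itself is a point in the intersection, so one of the $u_i$ may be chosen without loss to be ``used up'' by $z$, recovering the clean constant.

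The main obstacle, then, is not any deep idea but getting the constant exactly right: one must be careful about (a) the degenerate case $x_i = z$, (b) the distinction between strict and non-strict inequalities (the definition demands $x_i \in B(x_j, r_j)$, an \emph{open} ball, so one genuinely needs the strict angle bound $< \pi/3$, not $\leq \pi/3$, which is why $\pi/3$ rather than a larger angle is used and why the caps have radius $\pi/6$), and (c) whether the volume comparison in $\R^n$ gives $3^n$ or $3^n+1$ as the packing bound. I would resolve (c) by the balls-in-$B(0,3/2)$ argument sketched above, which gives that at most $3^n$ pairwise-well-separated unit vectors exist, hence among $3^n + 1$ of the $x_i$ (all $\neq z$) two are close; combined with the one possibly-discarded index this gives $k_{\bes}(\R^n) \leq 3^n + 1$, matching the statement. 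All of the inequalities involved — the law of cosines computation, the triangle-side comparison, and the volume bound — are routine, so I would not belabour them.
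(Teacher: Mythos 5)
Your overall strategy is the same as the paper's — reduce to unit vectors $u_i = (x_i - z)/|x_i - z|$, note that pairwise angular separation $\geq \pi/3$ forces $|u_i - u_j| \geq 1$, then pack balls of radius $\tfrac12$ about the $u_i$ inside $\overline{B}(0,\tfrac32)$ — and the geometric lemmas you cite (law of cosines, the implication from ``angle $< \pi/3$'' to ``one $x_i$ lies in the other's open ball'') are all correct. But as you yourself notice mid-stream, the bare volume count does not close: $m$ disjoint balls of radius $\tfrac12$ inside $\overline{B}(0,\tfrac32)$ gives only $m \leq 3^n$, which is \emph{not} a contradiction when $m = 3^n$, so your clean version only yields $k_{\bes}(\R^n) \leq 3^n + 2$. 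This is a genuine gap, not mere bookkeeping: with exactly $3^n$ well-separated unit vectors the packing can in principle be tight (the $m\cdot(\tfrac12)^n \leq (\tfrac32)^n$ bound is saturated with equality), so you cannot extract a contradiction from $3^n$ vectors without a further idea.

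The further idea, which is what the paper does and which your second ``alternatively'' gestures toward without quite landing, is to throw in one extra ball \emph{at the origin}. Since each $u_i$ lies on the unit sphere, we have $|u_i - 0| = 1$, so $B(0,\tfrac12)$ is disjoint from every $B(u_i,\tfrac12)$ and is also contained in $\overline{B}(0,\tfrac32)$. With $3^n$ unit vectors this gives $3^n + 1$ pairwise disjoint balls of radius $\tfrac12$ inside $\overline{B}(0,\tfrac32)$, and now the volume comparison $(3^n+1)(\tfrac12)^n > (\tfrac32)^n$ is a \emph{strict} inequality, hence a genuine contradiction. So $3^n$ unit vectors already force two within angle $< \pi/3$, and together with the one possibly-coincident point $x_i = z$ one recovers $k_{\bes}(\R^n) \leq 3^n + 1$ exactly. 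Your phrasing that ``one of the $u_i$ may be chosen without loss to be used up by $z$'' is not the right formulation — the $u_i$ all live on the unit sphere, whereas $z$ becomes the origin after translation — but the underlying instinct (that $z$ supplies a free extra center for the packing) is the correct one; you just need to realize it as the origin-ball, not as one of the $u_i$. Your first alternative (a direct spherical-cap measure argument on $S^{n-1}$) would require a separate computation and does not obviously produce the clean constant $3^n$, so the origin-ball trick is the cleaner route.
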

\proof By the argument just outlined for $\R^2=\C$, it suffices to show that if $3^n$ distinct points $y_1,\dots,y_m$ are taken on the unit sphere in $\R^n$ then there are distinct indices $i,j$ such that the angle $\angle y_i 0 y_j$ is less than $\pi/3$. But if there is no such pair of indices then the open spherical caps centred on $y_i$ and with radius $\pi/6$ are disjoint. We conclude by a volume-packing argument, considering the balls of radius $\frac{1}{2}$ centred on the points $y_i$ together with the one centred at the origin. There are at least $3^n+1$ of these balls, which are disjoint, have radius $\frac{1}{2}$, and are all contained in the ball of radius $\frac{3}{2}$ about the origin. This is impossible since $(3^n+1)/2^n> (3/2)^n$.\endproof

This has the following simple corollary, which we used in the proof of our main theorem.

\begin{corollary}
Let $(X,d)$ be the metric space consisting of the matrices $X = \Mat_n(\C)$ together with the distance induced from the Hilbert-Schmidt norm. Then the weak Besicovitch constant $k_{\bes}(X)$ is bounded by $3^{2n^2}+1$.\endproof
\end{corollary}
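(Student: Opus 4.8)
The plan is to reduce the statement about $\Mat_n(\C)$ with the Hilbert--Schmidt norm to the already-established Euclidean case in Lemma \ref{balls-besi}. The key observation is that $(\Mat_n(\C), d_{\HS})$ is isometric to a Euclidean space of real dimension $2n^2$. Indeed, writing each matrix entry $m_{ij} = x_{ij} + \sqrt{-1}\, y_{ij}$ with $x_{ij}, y_{ij} \in \R$, the map sending $M$ to the vector $(x_{ij}, y_{ij})_{1 \leq i,j \leq n} \in \R^{2n^2}$ is a bijection, and by definition
\[ \Vert M \Vert^2 = \sum_{i,j} |m_{ij}|^2 = \sum_{i,j} (x_{ij}^2 + y_{ij}^2),\]
so this map carries the Hilbert--Schmidt norm to the standard Euclidean norm on $\R^{2n^2}$, and hence the metric $d$ to the Euclidean metric.

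Next I would invoke the fact that the weak Besicovitch property in Definition \ref{besi-def} is purely metric: it refers only to points, radii, and open/closed balls, all of which are preserved under an isometry of metric spaces. Therefore $k_{\bes}(X) = k_{\bes}(\R^{2n^2})$ for the Euclidean $\R^{2n^2}$. Applying Lemma \ref{balls-besi} with $n$ replaced by $2n^2$ gives $k_{\bes}(\R^{2n^2}) \leq 3^{2n^2} + 1$, and the corollary follows at once.

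There is essentially no obstacle here; the only point requiring any care is the trivial but necessary remark that the weak Besicovitch constant is an isometry invariant, which is immediate from the form of the definition. One could write the whole argument in two or three lines: identify $\Mat_n(\C)$ with $\R^{2n^2}$ via real and imaginary parts of the entries, note that this identification is an isometry for the Hilbert--Schmidt and Euclidean metrics, and quote Lemma \ref{balls-besi}.

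\begin{proof}
Write each matrix $M \in \Mat_n(\C)$ in terms of the real and imaginary parts of its entries, $m_{ij} = x_{ij} + \sqrt{-1}\,y_{ij}$ with $x_{ij},y_{ij} \in \R$. The resulting map $M \mapsto (x_{ij},y_{ij})_{1 \leq i,j\leq n}$ is a bijection from $\Mat_n(\C)$ onto $\R^{2n^2}$, and
\[ \Vert M \Vert^2 = \sum_{i,j} |m_{ij}|^2 = \sum_{i,j}(x_{ij}^2 + y_{ij}^2)\]
is precisely the square of the Euclidean norm of the image. Hence this bijection is an isometry from $(X,d)$ onto $\R^{2n^2}$ with its Euclidean metric. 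Since the weak Besicovitch property of Definition \ref{besi-def} is phrased purely in terms of the metric (points, radii, and open and closed balls), it is invariant under isometry, so $k_{\bes}(X) = k_{\bes}(\R^{2n^2})$. The result now follows from Lemma \ref{balls-besi} applied with $2n^2$ in place of $n$, which gives $k_{\bes}(\R^{2n^2}) \leq 3^{2n^2} + 1$.
\end{proof}
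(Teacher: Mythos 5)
Your proof is correct and is exactly the argument the paper intends; the paper leaves the corollary with no written proof, treating the isometric identification of $(\Mat_n(\C),\Vert\cdot\Vert_{\HS})$ with Euclidean $\R^{2n^2}$ and the appeal to Lemma \ref{balls-besi} as immediate. You have simply spelled out that intended argument, including the (correct) observation that $k_{\bes}$ is an isometry invariant.
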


\section{Virtually abelian subgroups of $\U_n(\C)$.}
\label{virtually-abelian-app}

Our aim in this appendix is to outline a proof of the following statement.
\begin{proposition}\label{virtual-abelian}
Suppose that $G \leq \U_n(\C)$ be a virtually abelian group. Then there is a normal abelian subgroup $H \leq G$ with $[G: H] \leq O(n!(n+1)!)$.
\end{proposition}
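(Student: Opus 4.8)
The plan is to reduce to the structure of a maximal abelian normal subgroup and then control the quotient using the representation theory of the finite ``point group'' that acts on its character lattice. First I would let $A \trianglelefteq G$ be a maximal abelian normal subgroup; since $G$ is virtually abelian such an $A$ has finite index, and standard arguments (as in the proof of Jordan's theorem, or directly) show that $A$ equals its own centraliser $C_G(A)$, because any element of $G$ centralising $A$ together with $A$ would generate a larger abelian normal subgroup by maximality — here one uses that $A$ normal and $x$ centralising $A$ forces $\langle A, x\rangle$ abelian and, after intersecting with the finitely many $G$-conjugates, normal. Consequently $Q := G/A$ embeds into $\operatorname{Aut}(A)$.

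Next I would diagonalise. Being an abelian subgroup of $\U_n(\C)$, the group $A$ is simultaneously diagonalisable, so it lies in a maximal torus $T$; decompose $\C^n$ into the joint eigenspaces (weight spaces) $V_1, \dots, V_m$ of $A$, with $\dim V_j = d_j$ and $\sum d_j = n$. The group $G$, normalising $A$, permutes this set of weight spaces, giving a homomorphism $G \to \operatorname{Sym}(m)$ with image of size at most $\min(m!, n!)$; let $G_0$ be the kernel, the subgroup preserving each $V_j$. Then $G_0$ sits inside $\prod_j \U(V_j) \cong \prod_j \U_{d_j}(\C)$, and the projection $A_j$ of $A$ to each factor is a group of \emph{scalars} (since $V_j$ is a single joint eigenspace), so $A_j \subseteq \U_1(\C)$. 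Now within each factor $\U_{d_j}(\C)$ the image of $G_0$ normalises the scalar group, which is automatic, so this does not immediately bound things; instead the key point is that the \emph{centraliser} $C_{G}(A) = A$ forces the image of $G_0$ in $\prod_j \PU_{d_j}(\C)$ to act faithfully, but each $\U_{d_j}(\C)/(\text{scalars})$ factor is not finite — so one needs one more idea.

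The remaining idea is to iterate or, more cleanly, to observe that by maximality $A$ is actually its own centraliser in $\U_n(\C) \cap \langle G, A\rangle$ forces each $d_j = 1$: if some $d_j \geq 2$, the diagonal torus of $\U(V_j)$ would centralise $A$ and, being contained in the Zariski/topological normaliser, one could enlarge $A$ within $G$ only if those torus elements lay in $G$ — which they need not. The correct route is therefore: replace $G$ by $G$ acting on $\C^n$ and note that $A$ maximal abelian normal implies, via Clifford theory, that the $G$-representation $\C^n$ is a sum of at most $n$ Galois-conjugate characters of $A$ induced up, so $[G:A]$ divides $n!$ times the order of a subgroup of $\GL_k(\Z)$ preserving a rank-$\leq n$ lattice of finite index relations among the characters; combined with the $\operatorname{Sym}(m)$ bound and the crystallographic-type bound on finite subgroups of $\GL_n(\Z)$, this yields $[G:A] = O(n!(n+1)!)$, and finally one passes from $A$ to a possibly smaller \emph{characteristic} (hence normal) abelian subgroup of the same index bound. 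The main obstacle is making the ``point group'' bound genuinely give $(n+1)!$ rather than a crude $\GL_n(\Z)$ bound: this is where one must use that the finite subgroups of $\operatorname{Sym}(m) \ltimes (\text{permutation-compatible lattice automorphisms})$ arising here are of the special form stabilising a root-system-like configuration, so the sharp count is that of signed permutation / symmetric-type groups, giving the $n!(n+1)!$. I would handle this by induction on $n$, splitting off one orbit of weight spaces at a time, exactly paralleling the inductive structure in Section~\ref{jordan-sec}.
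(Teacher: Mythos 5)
The pivotal claim in your first paragraph --- that a maximal abelian normal subgroup $A \trianglelefteq G$ satisfies $A = C_G(A)$ --- is false for general virtually abelian $G$, and your attempted justification does not repair it. Take $G$ to be any finite nonabelian simple group inside $\U_n(\C)$ (for instance $A_5$ in its permutation or $3$-dimensional representation); $G$ is trivially virtually abelian, the unique (hence maximal) abelian normal subgroup is $A = \{e\}$, yet $C_G(A) = G$. Your parenthetical argument fails at exactly this point: taking $x \in C_G(A)\setminus A$, the group $\langle A, x\rangle$ is indeed abelian, but its normal core $\bigcap_g g\langle A,x\rangle g^{-1}$ need not be any larger than $A$ (in the $A_5$ example it is again $\{e\}$), so you do not obtain a strictly larger abelian normal subgroup and cannot contradict maximality. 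Since the whole plan rests on embedding $G/A$ into $\operatorname{Aut}(A)$ via $C_G(A)=A$, this is a genuine gap, not a cosmetic one. A second, independent problem is that the later paragraphs which are meant to produce the factor $(n+1)!$ are only a sketch: the passage from Clifford theory to a ``rank-$\leq n$ lattice of relations'' and then to a bound of size $(n+1)!$ (rather than, say, the Minkowski/Feit bound for finite subgroups of $\GL_n(\Z)$, which is considerably larger than $(n+1)!$) is not justified, and you yourself flag that ``the main obstacle'' at this point remains.

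For comparison, the paper's argument avoids both issues by working with the Zariski closure $\overline{G}$: its identity component $S=\overline{G}^0$ is a torus, the quotient $N(S)/Z(S)$ embeds in $\operatorname{Sym}(n)$ (giving the $n!$), Platonov's lemma writes $\overline{G}\cap Z(S)$ as $FS$ with $F$ finite, and then the $(n+1)!$ comes directly from Collins's sharp form of Jordan's theorem applied to the finite group $F\leq \U_n(\C)$. In other words, the sharp factorial bound is not extracted from lattice or root-system combinatorics at all, but is imported wholesale from Jordan--Collins after a reduction to a finite subgroup; if you want to keep your more explicit weight-space decomposition, you would still ultimately need to invoke Jordan's theorem on a suitable finite piece, and you would need a correct substitute for the centraliser step, e.g.\ passing through the Zariski closure or otherwise arranging that the relevant abelian normal subgroup is self-centralising.
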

\emph{Remark.} The rather strong bound we obtain relies heavily on Collins' s bound for Jordan's theorem \cite{collins}, which in turn depends on the Classification of Finite Simple Groups. Inputting softer proofs of Jordan's theorem (such as the one we gave in \S \ref{jordan-sec} of this paper) would give a vastly more elementary argument, but would lead to correspondingly cruder bounds of the form $\exp(Cn^C)$.\vspace{8pt}

\begin{proof}
Passing to the Zariski closure, we may assume without loss of generality that $G$ is an algebraic subgroup. Its connected component of the identity is a torus $S \subset \U_n(\C)$. The centraliser $Z(S)$ of this torus is a direct product of unitary groups $\U_m(\C)$ which are permuted by the normaliser $N(S)$. In particular, $[G:Z(S)]\leq [N(S):Z(S)] \leq n!$. According to a lemma of Platonov, for any algebraic subgroup $H \leq \GL_n(\C)$ there exists a finite subgroup $F$ such that $H=FH^0$, where $H^0$ is the Zariski connected component of the identity (see \cite[10.10]{wehrfritz}). Applying this to $H = G \cap Z(S)$, we get a finite subgroup $F\subset Z(S)$ such that $G \cap Z(S)=FS$. By Jordan's theorem and Collins's bound \cite{collins}, there is a normal abelian subgroup $F_0 \subseteq F$ of index $O((n+1)!)$. Now $F_0S$ is abelian and normal in $G$ and of index $O(n!(n+1)!)$.
\end{proof}

We conclude by remarking that simple examples show that no analogue of Proposition \ref{virtual-abelian} holds in $\GL_n(\C)$. Indeed the group $G \leq \GL_2(\C)$ consisting of all upper triangular matrices whose diagonal entries are $1$ and an $m$th root of unity is virtually abelian yet has no abelian subgroup of index less than $m$.

\end{document}